\theoremstyle{plain}
\newtheorem{theorem}{Theorem}[section]
\newtheorem{lemma}[theorem]{Lemma}
\theoremstyle{definition}
\newtheorem{remark}[theorem]{Remark}
\newtheorem*{problem}{Inverse Problem}
\newcommand{\Spvek}[2][r]{%
  \gdef\@VORNE{1}
  \left(\hskip-\arraycolsep%
    \begin{array}{#1}\vekSp@lten{#2}\end{array}%
  \hskip-\arraycolsep\right)}
\def\vekSp@lten#1{\xvekSp@lten#1;vekL@stLine;}
\def\vekL@stLine{vekL@stLine}
\def\xvekSp@lten#1;{\def\temp{#1}%
  \ifx\temp\vekL@stLine
  \else
    \ifnum\@VORNE=1\gdef\@VORNE{0}
    \else\@arraycr\fi%
    #1%
    \expandafter\xvekSp@lten
  \fi}
\begin{document}
\title[Inverse Problems for Jacobi Operators with mixed spectral data]
{Inverse Problems for Jacobi Operators with mixed spectral data\\}
\author{Burak Hat\.{i}no\u{g}lu}
\address{Department of Mathematics, Texas A{\&}M University, College Station,
	TX 77843, U.S.A.}
\email{burakhatinoglu@math.tamu.edu}

\subjclass[2010]{}


\keywords{inverse spectral theory, semi-infinite Jacobi operators, Weyl $m$-function}

\begin{abstract}
We consider semi-infinite Jacobi matrices with discrete spectrum. We prove that the Jacobi operator can be uniquely 
recovered from one spectrum and subsets of another spectrum and norming constants corresponding to the first spectrum. 
We also solve this Borg-Marchenko-type problem under some conditions on two spectra, when missing part of the second spectrum and known 
norming constants have different index sets.
\end{abstract}
\maketitle

\section{\bf {Introduction}}

The Jacobi operator $J$ in the dense subset $c_{00}(\mathbb{N})$ of the Hilbert space $l^2(\mathbb{N})$ is the operator associated with the 
semi-infinite Jacobi matrix
\begin{equation*}
 \begin{pmatrix}
a_1 & b_1 & 0 & 0 \\
b_1 & a_2 & b_2 & \ddots \\
0  &  b_2 & a_3 & \ddots  \\
0 & \ddots & \ddots & \ddots
\end{pmatrix}
\end{equation*}
where $a_n \in \mathbb{R}$ and $b_n ~\textgreater~ 0$ for any $n \in \mathbb{N}$. The symmetric operator $J$ is closable and has deficiency indices 
(1,1) [limit point] or (0,0) [limit circle]. In the limit point case $\bar{J}$ is self-adjoint. However, in the limit circle case non self-adjoint 
operator $\bar{J}$ has a self-adjoint extension $J(g)$ uniquely determined by $g \in \mathbb{R} \cup \{\infty\}$ (see Section 2 and \cite{TES} Section 2.6). 
In both cases, a rank-one perturbation of a self-adjoint Jacobi operator can be seen as a change of the boundary condition at the origin for the 
corresponding Jacobi difference equation (see Section 2 and \cite{SW} Appendix).

Direct spectral problems aim to get spectral information from the sequences $\{a_n\}_{n \in \mathbb{N}}$ and $\{b_n\}_{n \in \mathbb{N}}$. In inverse 
spectral problems, the goal is to recover these sequences from spectral information, like the spectrum, the norming constants, the spectral measure 
or Weyl $m$-function.

The study of inverse problems for Jacobi operators is motivated both by pure mathematics, e.g. moment problems \cite{SIM} and physical applications, 
such as vibrating systems \cite{GLA,RAM}. 

Early inverse spectral problems for finite Jacobi matrices appear as discrete analogs of inverse spectral problems for Schr\"{o}dinger (Sturm-Liouville) 
equations
\begin{equation*}
 Lu = -u''+qu = zu,
\end{equation*}
where the potential $q\in L^1(0,\pi)$ is real-valued. Borg \cite{BOR} proved that an $L^1$-potential is uniquely recovered from two spectra, 
corresponding to various pairs of boundary conditions. Marchenko \cite{MAR} observed that the spectral measure (or Weyl-Titchmarsh $m$-function) 
uniquely recovers an $L^1$-potential. Another classical result is due to Hochstadt and Liebermann \cite{HL}, which says that if half of an 
$L^1$-potential is known, one spectrum recovers the whole. One can find the statements of these classical theorems and some other results from the 
inverse spectral theory of Schr\"{o}dinger operators e.g. in \cite{HAT} and references therein. 

Finite Jacobi matrix analogs of Borg's and Hochstadt and Lieberman's theorems were considered by Hochstadt \cite{HOC,HOC2,HOC3}, where the potential 
$q$ is replaced by the sequences $\{a_n\}_{n \in \mathbb{N}}$ and $\{b_n\}_{n \in \mathbb{N}}$. These classical theorems led to various other 
inverse spectral results on finite Jacobi matrices (see \cite{BD,DK,GES,GS,S,WW} and references therein), 
semi-infinite or infinite Jacobi matrices (see \cite{DKS,DKS2,DKS3,DS,ET,GES,GS,SW,SW2,TES2} and references therein), generalized Jacobi matrices 
(see \cite{D,D2} and references therein) and matrix-valued Jacobi operators (see \cite{CGR,GKM} and references therein). 
These problems can be divided into two groups. In Borg-Marchenko-type spectral problems, 
one tries to recover the sequences $\{a_n\}_{n \in \mathbb{N}}$ and $\{b_n\}_{n \in \mathbb{N}}$ from the spectral data. However, 
Hochstadt-Lieberman-type (or mixed) spectral problems recover the sequences $\{a_n\}_{n \in \mathbb{N}}$ and $\{b_n\}_{n \in \mathbb{N}}$ using a 
mixture of partial information on these sequences and spectral data.

Silva and Weder (\cite{SW} Theorem 3.3) proved Borg's two-spectra theorem for semi-infinite Jacobi matrices with a discrete spectrum. Later on Eckhardt 
and Teschl (\cite{ET} Theorem 5.2) considered infinite Jacobi matrix analog of Marchenko's result with the same discreteness of the spectrum assumption. 
Note that discreteness of the spectrum is an extra assumption in the limit point case.

Jacobi versions of Borg's and Hochstadt and Lieberman's theorems suggest that one spectrum gives exactly one half of the full spectral information 
required to recover the sequences $\{a_n\}_{n \in \mathbb{N}}$ and $\{b_n\}_{n \in \mathbb{N}}$. Let us recall the fact that in the case of discrete 
spectrum, the spectral measure is a discrete measure supported on the spectrum with the point masses given by the corresponding norming constants
(see \cite{ET} page 10). As follows from Jacobi analogs of Marchenko's theorem, the set of point masses of the 
spectral measure (or the set of norming constants) gives exactly one half of the full spectral information required to recover the sequences 
$\{a_n\}_{n \in \mathbb{N}}$ and $\{b_n\}_{n \in \mathbb{N}}$.

These observations allow us to formulate the following question:
\begin{problem}
Do one spectrum and partial information on another spectrum and the set of norming constants corresponding to the first spectrum recover 
the operator?
\end{problem}
This Borg-Marchenko-type problem can be seen as a combination of Silva and Weder's and Eckhardt and Teschl's results for semi-infinite Jacobi matrices.

In the present paper, we answer this question positively. Theorem \ref{IP1} solves this inverse spectral problem when given part of the norming constants 
corresponding to the first discrete spectrum matches with the missing part of the second discrete spectrum, i.e. they share the same index sets. 
In Theorem \ref{IP2} and Theorem \ref{IP3} we show that information of one of the boundary conditions can be replaced by any unknown eigenvalue from 
the second spectrum or any unknown norming constant corresponding to the first spectrum. In Theorems \ref{nIP1}, \ref{nIP2} and \ref{nIP3} we consider 
the same problems in the non-matching index sets case with some restrictions on the two spectra.

The paper is organized as follows. In Section 2 we recall necessary definitions and results from the spectral theory of Jacobi operators, namely 
self-adjoint extensions and rank-one perturbations of semi-infinite Jacobi operators, Weyl $m$-functions and the norming constants. In Section 3 after 
recalling Silva and Weder's characterization of the two spectra result, Theorem \ref{C2S}, and the two-spectra theorem, Theorem \ref{2ST}, we 
prove the inverse spectral problem mentioned above, Theorem \ref{IP1} along with Theorems \ref{IP2} and \ref{IP3}. In Section 4 we consider the same problems 
in the non-matching index sets case.

 \section{\bf {Preliminaries}}
 In this section we closely follow \cite{TES}.
 
 We consider the difference expression $\tau:l(\mathbb{N}) \rightarrow l(\mathbb{N})$
 \begin{align}
  (\tau f)_n &:= b_{n-1}f_{n-1} + a_nf_n + b_nf_{n+1}, \quad \quad n \in \mathbb{N} \text{\textbackslash} \{1\}\\
  (\tau f)_1 &:= a_1f_1 + b_1f_2
 \end{align}
where $a_n \in \mathbb{R}$, $b_n ~\textgreater~ 0$ for all $n \in \mathbb{N}$ and $l(\mathbb{N})$ is the set of complex valued sequences indexed by 
natural numbers. The difference expression $\tau$ is represented as the tridiagonal matrix
\begin{equation}
 \begin{pmatrix}
a_1 & b_1 & 0 & 0 & 0 \\
b_1 & a_2 & b_2 & 0 & \ddots \\
0  &  b_2 & a_3 & b_3 & \ddots  \\
0  & 0 & b_3 & a_4 & \ddots  \\
0 & \ddots & \ddots & \ddots & \ddots
\end{pmatrix}
\end{equation}
with respect to the canonical basis of $l^2(\mathbb{N})$. 

Let $c_z$,$s_z$ $\in l(\mathbb{N})$ be two fundamental solutions of the Jacobi difference equation 
\begin{equation}\label{DifEq}
 \tau u = zu \quad \quad u \in l(\mathbb{N}), ~ z\in \mathbb{C}, 
\end{equation}
satisfying the initial conditions 
\begin{align*}
&s_{z}(1) = 0, \quad s_{z}(2) = 1,\\
&c_{z}(1) = 1, \quad c_{z}(2) = 0.
\end{align*}
Since $c$ and $s$ are linearly independent, we write any solution $u$ of (\ref{DifEq}) as a linear combination of these two solutions
\begin{equation}\label{repu}
 u_z(n) = \frac{W_n(u_z,s_z)}{W_n(c_z,s_z)}c(n) - \frac{W_n(u_z,c_z)}{W_n(c_z,s_z)}s(n), 
\end{equation}
where $W$ is the Wronskian given by
\begin{equation*}
 W_n(f,g) = a(n)(f(n)g(n+1) - g(n)f(n+1)).
\end{equation*}
Note that the Wronskian of two solutions of (\ref{DifEq}) with the same $z$ is constant, so the coefficients of $c$ and $s$ in (\ref{repu}) are 
constant.

If $\{a_n\}$ and $\{b_n\}$ are bounded, then the Jacobi operator $J: l^2(\mathbb{N}) \rightarrow l^2(\mathbb{N})$ is defined as $Jf = \tau f$. 
However without the boundedness condition on $\{a_n\}$ or $\{b_n\}$, the operator $J$ is no longer defined on all of $l^2(\mathbb{N})$. Here one 
needs to introduce the minimal and maximal operators associated with $\tau$ as
\begin{align*}
 J_{min}:~ &\mathcal{D}(J_{min}) \rightarrow l^2(\mathbb{N}), \quad \quad J_{max}:~ \mathcal{D}(J_{max}) \rightarrow l^2(\mathbb{N})\\
 & f \mapsto \tau f \qquad \qquad \quad \quad \quad \qquad f \mapsto \tau f,
\end{align*}
where $\mathcal{D}(J_{min}) = c_{00}(\mathbb{N})$ and $\mathcal{D}(J_{max}) = \{ f \in l^2(\mathbb{N})~ |~ \tau f \in l^2(\mathbb{N})\}$. 
Green's formula implies that $J_{min}^* = J_{max}$ and 
\begin{align*}
 J_{max}^*=\overline{J_{min}} :~ &\mathcal{D}(J_{max}^*) \rightarrow l^2(\mathbb{N})\\
 & f \mapsto \tau f,
\end{align*}
where $\mathcal{D}(J_{max}^*) = \{ f \in J_{max} ~|~ \lim_{n \rightarrow \infty} W_n(\overline{f},g) = 0 ~,~ g \in J_{max}\}$ (\cite{TES}, Section 2.6).

In order to discuss self-adjoint extensions of the minimal operator we use limit point and limit circle classifications of $\tau$. The difference 
expression $\tau$ is called limit point ($l.p.$) if $s_{z_0} \notin l^2(\mathbb{N})$ for some $z_0 \in \mathbb{C}\text{\textbackslash}\mathbb{R}$ 
and limit circle ($l.c.$) otherwise.

The maximal operator $J_{max}$ is self-adjoint if and only if $\tau$ is $l.p.$ (\cite{TES}, Lemma 2.16). Therefore in the limit point case $J_{max}$ 
is a self adjoint extension of the minimal Jacobi operator $J_{min}$. 

If $\tau$ is limit circle, we define the set of boundary conditions at $\infty$ as
\begin{equation*}
  BC(\tau) = \{ v \in \mathcal{D}(J_{max})~|~ \lim_{n \rightarrow \infty} W_n(\bar{v} ,v) = 0,\lim_{n \rightarrow \infty} W_n(\bar{v},f) 
  \neq 0 \text{ for some } f \in \mathcal{D}(J_{max})\}.
\end{equation*}
Then for any $v \in BC(\tau)$, the operator
\begin{align*}
 J_v :~ &\mathcal{D}(v) \rightarrow l^2(\mathbb{N})\\
 & f \mapsto \tau f,
\end{align*}
is a self-adjoint extension of $J_{min}$, where $\displaystyle\mathcal{D}(v) = \{f \in \mathcal{D}(J_{max})~|~ 
\lim_{n \rightarrow \infty} W_n(v,f) = 0\}$ (\cite{TES}, Theorem 2.18). We parametrize self-adjoint extensions of $J_{min}$ in the limit circle case 
by defining
\begin{equation*}
 v_{\alpha}(n) = \cos(\alpha)c_0(n) + \sin(\alpha)s_0(n), \quad \quad \alpha \in [0,\pi)
\end{equation*}
and observing that different values of $\alpha$ give different extensions. Then all self-adjoint extensions of $J_{min}$ correspond to some $v_{\alpha}$ 
with unique $\alpha \in [0,\pi)$ (\cite{TES}, Lemma 2.20). Therefore in the limit circle case, following \cite{SW} we define $J(g) := J_v$ for 
$g \in \mathbb{R}\cup\{\infty\}$, where $g = \cot(\alpha)$ and $\alpha \in [0,\pi)$. In the limit point case, i.e. if $\overline{J_{min}}$ is 
self-adjoint, we let $J(g) := \overline{J_{min}}$ for all $g \in \mathbb{R}\cup\{\infty\}$. 

If $\tau$ is $l.c.$, i.e. $J_{min} \neq J_{min}^*$, then the spectrum of $J(g)$, denoted by $\sigma(J(g))$, is discrete (\cite{TES}, Lemma 2.19). 
Throughout the paper we assume $J(g)$ has a discrete spectrum, which is a restriction in the limit point case. Note that since the essential spectrum 
of a bounded Jacobi operator is always nonempty, discreteness of $\sigma(J(g))$ implies unboundedness of $J(g)$ (\cite{TES}, Section 3.2). 

We define the self-adjoint operator $J_h(g)$ by $J_h(g) := J(g) - h<\cdot,e_1>e_1$ for $h \in \mathbb{R}$, where $\{e_n\}_{n \in \mathbb{N}}$ is the 
canonical basis in $l^2(\mathbb{N})$. It is the rank-one perturbation of $J(g)$ by $h$. If we consider the operator $J(\beta,g)$ defined by the difference expression
\begin{equation*}
 (\tilde{\tau}f)_n := b_{n-1}f_{n-1} + a_nf_n + b_nf_{n+1}, \quad \quad n \in \mathbb{N}
\end{equation*}
with the boundary condition
\begin{equation*}
 f_1\cos\beta + f_0\sin\beta = 0, \quad \quad \beta \in (0,\pi),
\end{equation*}
then $J_h(g) = J(\beta,g)$ for $h = \cot\beta$. Hence $h$ can be seen as a boundary condition. Note that discreteness of $\sigma(J(g))$ implies 
discreteness of $\sigma(J_h(g))$ for any $h \in \mathbb{R}$. Moreover, $\sigma(J_{h_1}(g)) \cap \sigma(J_{h_2}(g)) = \emptyset$ if $h_1 \neq h_2$.

The Weyl $m$-function of $J_h(g)$, defined as 
\begin{equation*}
 m_h(z,g) := <e_1,(J_h(g) - z)^{-1}e_1>,
\end{equation*}
is a meromorphic Herglotz function, i.e. a meromorphic function with positive imaginary part on $\mathbb{C}_+$ satisfying 
$m(\overline{z}) = \overline{m(z)}$ (\cite{TES}, Section 2.1). By Neumann expansion for the resolvent
\begin{equation*}
 (J_h(g) - z)^{-1} = -\sum_{n=0}^{N-1}\frac{(J_h(g))^n}{z^{n+1}} + \frac{1}{z^N}(J_h(g))^N(J_h(g) - z)^{-1}, 
\end{equation*}
where $z \in \mathbb{C}\text{\textbackslash}\sigma(J_h(g))$ we get the following asymptotics of $m_h(z,g)$:
\begin{equation}\label{asym}
 m_h(z,g) = -\frac{1}{z} - \frac{a_1 - h}{z^2} - \frac{(a_1 - h)^2 + b_1^2}{z^3} + O(z^{-4}),
\end{equation}
as $z \rightarrow \infty$ for $\Im z \geq \epsilon$, $\epsilon~ \textgreater~ 0$ (\cite{TES}, Section 6.1).

Since the $m$-function $m_h(z,g)$ is Herglotz, if $\lambda$ is an isolated eigenvalue of $J_h(g)$, then $m_h(z,g)$ has a simple 
pole at $z=\lambda$ (\cite{TES}, Section 2.2). The norming constant corresponding to the eigenvalue $\lambda_k$ of $J_h(g)$ is  
\begin{equation*}
 \gamma_k(h) = \left(\sum_{n \in \mathbb{N}}|u_{\lambda_k}(n)|^2\right)^{-1},
\end{equation*}
where $u_{z} \in l^2(\mathbb{N})$ solves (\ref{DifEq}). The residue of $m_h(z,g)$ at the pole $\lambda_k$ is given by 
$-\gamma_k(h)$ (\cite{TES}, p.214).

One finds a detailed discussion of the spectral theory of Jacobi operators in \cite{TES}, which we have followed so far.

\section{\bf {Inverse spectral problems with mixed data}}
 
 We follow the enumeration introduced in \cite{SW} for enumerating the sequences of eigenvalues. Let $\{\lambda_n\}_{n}$ and 
 $\{\nu_n\}_{n}$ be a pair of discrete, interlacing, infinite real sequences and $M \subset \mathbb{Z}$. Then 
 $\lambda_n ~\textless~ \nu_n ~\textless~ \lambda_{n+1}$ for all $n \in M$, where 
 \begin{itemize}
  \item If $\inf_{n}\{\lambda_n\}_n = - \infty$ and $\sup_{n}\{\lambda_n\}_n = \infty$, then $M:=\mathbb{Z}$ and 
  $\nu_{-1} ~\textless~ 0 ~\textless~ \lambda_1$.\\
  \item If $0 ~\textless~ \sup_{n}\{\lambda_n\}_n ~\textless~ \infty$, then $M:=\{n\}_{n = -\infty}^{n_{max}}$, $n_{max} \geq 1$ and 
  $\nu_{-1} ~\textless~ 0 ~\textless ~\lambda_1$.\\
  \item If $\sup_{n}\{\lambda_n\}_n \leq 0$, then $M:=\{n\}_{n = -\infty}^{0}$.\\
  \item If $\inf_{n}\{\nu_n\}_n \geq 0$, then $M:=\{n\}_{n = 0}^{\infty}$.\\
  \item If $-\infty ~\textless~ \inf_{n}\{\nu_n\}_n ~\textless~ 0$, then $M:=\{n\}_{n = n_{min}}^{\infty}$, $n_{min} \leq -1$ and 
  $\nu_{-1} ~\textless~ 0 ~\textless~ \lambda_1$.\\
 \end{itemize}
 
Silva and Weder gave a characterization of the two spectra of $J(g)$ corresponding to different boundary conditions, if $J(g)$ has a discrete spectrum.
\begin{theorem}\label{C2S} \emph{(}\cite{SW} Theorem 3.4\emph{)} \emph{\textbf{(Characterization of two spectra)}} Given $h_1 \in \mathbb{R}$ and two infinite discrete sequence 
of real numbers $\{\lambda_n\}_{n\in M}$ and $\{\nu_n\}_{n\in M}$, there is a unique real number $h_2 ~\textgreater~ h_1$, a unique operator 
$J(g)$, and if $J_{min} \neq J_{min}^*$ also a unique $g \in \mathbb{R} \cup \{+\infty\}$, such that $\{\nu_n\}_{n\in M} = \sigma(J_{h_1}(g))$ and 
$\{\lambda_n\}_{n\in M} = \sigma(J_{h_2}(g))$ if and only if the following conditions are satisfied.
\begin{enumerate}
 \item $\{\lambda_n\}_{n\in M}$ and $\{\nu_n\}_{n\in M}$ interlace and, if $\{\lambda_n\}_{n\in M}$ is bounded from below,\\ 
 $\displaystyle\min_{n\in M}\{\nu_n\}_{n\in M} ~\textgreater~ \min_{n\in M}\{\lambda_n\}_{n\in M}$, while if $\{\lambda_n\}_{n\in M}$ is 
 bounded from above,\\
 $\displaystyle\max_{n\in M}\{\nu_n\}_{n\in M} ~\textgreater~ \max_{n\in M}\{\lambda_n\}_{n\in M}$.
 \item The following series converges 
 \begin{equation}\label{delta}
  \sum_{n \in M}(\nu_n - \lambda_n) = \Delta ~\textless~ \infty.
 \end{equation}
By condition (\ref{delta}) the product $\displaystyle \prod_{n \in M,n\neq k}\frac{\nu_n - \lambda_k}{\lambda_n - \lambda_k}$ is convergent, 
so define 
\begin{equation}
 \tau_k^{-1} := \frac{\nu_k - \lambda_k}{\Delta} \prod_{n \in M,n\neq k}\frac{\nu_n - \lambda_k}{\lambda_n - \lambda_k}, \quad \quad \quad 
 \forall k \in M.
\end{equation}
 \item The sequence $\displaystyle \{\tau_n\}_{n \in M}$ is such that, for $m=0,1,2,\dots$, the series 
 $\displaystyle \sum_{n \in M} \frac{\lambda_n^{2m}}{\tau_n}$ converges.
 \item If a sequence of complex numbers $\displaystyle \{\beta_n\}_{n \in M}$ is such that the series 
 $\displaystyle \sum_{n \in M} \frac{|\beta_n|^2}{\tau_n}$ converges and for $m=0,1,2,\dots$, 
 $\displaystyle \sum_{n \in M} \frac{\beta_n\lambda_n^m}{\tau_n} = 0$, then $\beta_n = 0$ for all $n \in M$.
\end{enumerate}
\end{theorem}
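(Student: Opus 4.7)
The plan is to use the Aronszajn--Krein rank-one perturbation machinery for the Weyl $m$-function. Since the spectrum is discrete, $m_{h_1}(z,g)$ is meromorphic on $\mathbb{C}$ with simple poles at the $\nu_n$ and residues $-\gamma_n(h_1)$, and the identity $J_{h_2}(g) = J_{h_1}(g) - (h_2-h_1)\langle\cdot,e_1\rangle e_1$ translates into
\begin{equation*}
m_{h_2}(z,g) = \frac{m_{h_1}(z,g)}{1 - (h_2-h_1)\,m_{h_1}(z,g)},
\end{equation*}
so $\{\lambda_n\}_{n\in M}$ is exactly the set of real solutions of $m_{h_1}(z,g) = (h_2-h_1)^{-1}$. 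Both directions hinge on this identity.

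For necessity, strict interlacing and the min/max inequalities in (1) follow because $m_{h_1}$ is strictly increasing on each component of $\mathbb{R}\setminus\{\nu_n\}$ with $\pm\infty$ boundary values at the poles, and $(h_2-h_1)^{-1}>0$. A standard rank-one trace identity (e.g.\ via the Krein spectral shift) equates $\Delta$ with $h_2-h_1$, giving (2). Writing $m_{h_1}(z,g) = \sum_n \gamma_n(h_1)/(\nu_n - z)$ and computing residues of $1/(1-(h_2-h_1)m_{h_1})$ at $z=\lambda_k$ via the interlaced product formula gives $\gamma_n(h_1) = \Delta/\tau_n$; the asymptotics (\ref{asym}) for $m_{h_1}$ then force the moment convergence in (3). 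Condition (4) reflects polynomial density in $L^2$ of the spectral measure, which is equivalent to essential self-adjointness and uniqueness of the extension parameter $g$.

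For sufficiency, set $\gamma_n := \Delta/\tau_n$, form $\mu := \sum_n \gamma_n \delta_{\nu_n}$, and observe that (3) gives finite moments of all orders while (4) gives density of polynomials in $L^2(\mu)$. Gram--Schmidt of $\{1,z,z^2,\dots\}$ in $L^2(\mu)$ produces unique sequences $\{a_n\}$ and $\{b_n\}$, hence $\tau$, $J_{min}$, and a self-adjoint operator $J_{h_1}(g)$ whose spectral measure at $e_1$ is $\mu$; in the limit circle case the boundary parameter $g$ is uniquely pinned down by requiring $\sigma(J_{h_1}(g)) = \{\nu_n\}$. Setting $h_2 := h_1 + \Delta$, the product formula for $\tau_k^{-1}$ matches the residues of $m_{h_2}$ and places its poles at $\{\lambda_n\}$; uniqueness of $h_2$, $J(g)$, and $g$ is then inherited from uniqueness of $m_{h_1}$.

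The hard part will be the sufficiency use of condition (4): condition (3) alone only guarantees a finite moment sequence, but (4) is what rules out indeterminacy, so that the Gram--Schmidt procedure produces unique Jacobi parameters and (in the $l.c.$ case) the correct $g$. A secondary technical point is verifying cleanly that the specific normalization $\tau_k^{-1} = ((\nu_k-\lambda_k)/\Delta)\prod_{n\neq k}(\nu_n-\lambda_k)/(\lambda_n-\lambda_k)$ matches the Herglotz residues of $m_{h_1}$; this requires a careful interlacing-driven residue computation rather than a routine manipulation.
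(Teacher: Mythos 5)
First, a point of reference: the paper does not prove this statement at all --- Theorem \ref{C2S} is imported verbatim from Silva and Weder (\cite{SW}, Theorem 3.4) and used as a black box, so there is no in-paper proof to compare against. Your outline does follow the same general strategy as the cited source (rank-one $m$-function identity, trace formula for $\Delta$, reduction of sufficiency to a moment problem whose solution must be N-extremal), so the architecture is sound. But as written it contains a concrete misidentification that would derail the sufficiency direction.

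The quantities $\tau_k^{-1}$ are attached to the points $\lambda_k$, not to the $\nu_k$: the product $\frac{\nu_k-\lambda_k}{\Delta}\prod_{n\neq k}\frac{\nu_n-\lambda_k}{\lambda_n-\lambda_k}$ is (up to sign and the factor $\Delta^{-1}$) the residue at $z=\lambda_k$ of $\prod_n\frac{z-\nu_n}{z-\lambda_n}=m_{h_2}/m_{h_1}=1+(h_2-h_1)m_{h_2}$, which gives $\tau_k^{-1}=\gamma_k(h_2)$, the norming constant of $J_{h_2}(g)$ at its eigenvalue $\lambda_k$. Accordingly, the measure you must build in the sufficiency step is $\mu=\sum_{n\in M}\tau_n^{-1}\delta_{\lambda_n}$, the spectral measure of $J_{h_2}(g)$ with $h_2:=h_1+\Delta$; conditions (3) and (4), which involve $\sum_n\lambda_n^{2m}/\tau_n$, are precisely the finite-moment and polynomial-density (N-extremality) hypotheses for \emph{that} measure. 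Your $\mu:=\sum_n\gamma_n\delta_{\nu_n}$ with $\gamma_n=\Delta/\tau_n$ is supported on the wrong spectrum, and its moments are $\sum_n\nu_n^{2m}\Delta/\tau_n$, which conditions (3)--(4) do not control; the construction cannot proceed from there. (The confusion is understandable, since the paper itself swaps the roles of $\{\lambda_n\}$ and $\{\nu_n\}$ relative to $h_1,h_2$ between Theorem \ref{C2S} and Theorem \ref{IP1}, but it must be fixed.) Two smaller caveats: condition (4) does not make the Hamburger moment problem determinate --- it makes $\mu$ an N-extremal solution, which is what pins down the extension parameter $g$ in the limit circle case, while the Jacobi parameters themselves already follow from the moments alone; and the necessity of the convergence in (2) (the trace identity $\Delta=h_2-h_1$) and the residue computation matching $\tau_k^{-1}$ to $\gamma_k(h_2)$ are each real arguments that your sketch only names rather than carries out.
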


Silva and Weder also proved that the spectral data consisting of two discrete spectra and one of the boundary conditions uniquely determine 
the operator $J(g)$ and the other boundary condition.
\begin{theorem}\label{2ST} \emph{(}\cite{SW} Theorem 3.3\emph{)} \emph{\textbf{(Two-spectra theorem)}} Let $J(g)$ be the Jacobi operator with discrete spectrum, $h_1,h_2 \in \mathbb{R}$, 
$h_1 \neq h_2$, $\sigma(J_{h_1}(g)) = \{\lambda_n\}_{n \in M}$ and $\sigma(J_{h_2}(g)) = \{\nu_n\}_{n \in M}$. Then 
$\{\lambda_n\}_{n \in M}$, $\{\nu_n\}_{n \in M}$ and $h_1$(respectively $h_2$) 
uniquely determine the operator $J(g)$, $h_2$(respectively $h_1$) and if $J_{min} \neq J_{min}^*$, the boundary condition $g$ at infinity. 
\end{theorem}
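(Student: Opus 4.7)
My plan is to deduce Theorem \ref{2ST} from the characterization provided by Theorem \ref{C2S}. Fix the Jacobi operator $J(g)$ and $h_1 \neq h_2$ with $\{\lambda_n\}_{n \in M} = \sigma(J_{h_1}(g))$ and $\{\nu_n\}_{n \in M} = \sigma(J_{h_2}(g))$. Relabel so that $h_1 < h_2$. I plan to verify that the triple $(h_1, \{\lambda_n\}, \{\nu_n\})$ satisfies conditions (1)--(4) of Theorem \ref{C2S}; once this is done, the uniqueness statement of that theorem immediately yields a unique $(h_2', J'(g'), g')$ realizing the two sequences as the spectra. Since the actual triple $(h_2, J(g), g)$ is one such realization, the two must coincide, giving Theorem \ref{2ST}. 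The symmetric case in which $h_2$ is given instead of $h_1$ follows by swapping the roles of the two sequences.

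The main analytic tool I would use is the rank-one perturbation identity
\begin{equation*}
\frac{1}{m_{h_1}(z,g)} - \frac{1}{m_{h_2}(z,g)} = h_2 - h_1,
\end{equation*}
derived from the Sherman--Morrison formula applied to $J_{h_2}(g) = J_{h_1}(g) - (h_2-h_1)\langle\cdot,\, e_1\rangle e_1$, together with the Herglotz representation of each $m_{h_i}(z,g)$ as a Cauchy transform of the purely atomic spectral measure with total mass one (available since the spectrum is discrete). From these, the interlacing in condition (1) of Theorem \ref{C2S} follows from standard min--max arguments for a negative rank-one perturbation, giving $\lambda_n < \nu_n < \lambda_{n+1}$. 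Condition (\ref{delta}) comes out as $\Delta = h_2 - h_1$ via a contour-integral/residue computation applied to the rank-one identity, using the leading $-1/z$ asymptotics from \ref{asym}. Conditions (3) and (4) correspond to moment bounds and spectral completeness for the norming constants $\gamma_n(h_1)$ and can be read off the same Herglotz representation combined with the full asymptotic expansion \ref{asym}.

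I expect the main obstacle to lie in the careful analytic justification of conditions (\ref{delta}), (3), and (4) when $J(g)$ is unbounded: the trace-type identities one writes formally need honest contour-integral arguments to be made rigorous, and in the limit circle case one must additionally track how the boundary parameter $g$ enters the $m$-function. If the reduction to Theorem \ref{C2S} becomes cumbersome, a self-contained alternative is to observe that $m_{h_1}(z,g) - 1/(h_2-h_1)$ is a meromorphic Herglotz function with poles exactly at $\{\lambda_n\}$ and zeros exactly at $\{\nu_n\}$; a canonical product representation controlled by the asymptotics \ref{asym} reconstructs $m_{h_1}(z,g)$ uniquely, and the Jacobi coefficients $\{a_n\}, \{b_n\}$ together with the boundary condition $g$ are then recovered from $m_{h_1}(z,g)$ by continued-fraction expansion and Weyl circle analysis.
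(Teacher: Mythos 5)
The paper does not actually prove this theorem: it is imported verbatim from Silva and Weder (\cite{SW}, Theorem 3.3), so there is no internal proof to compare against. Judged on its own terms, your proposal is essentially workable, and your closing ``self-contained alternative'' is in fact the closest to how the result is established in \cite{SW}: the two spectra determine the meromorphic Herglotz quotient $F = m_{h_1}/m_{h_2} = \prod_{n}(z-\nu_n)/(z-\lambda_n)$ through its zeros, poles, and normalization at infinity; the identity $F = 1-(h_2-h_1)m_{h_1}$ together with $|h_2-h_1| = |\sum_{n}(\nu_n-\lambda_n)|$ then recovers $m_{h_1}$; and the $m$-function determines the Jacobi coefficients and, in the limit circle case, $g$. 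Two remarks on your primary route through Theorem \ref{C2S}. First, you need not re-derive conditions (1)--(4) by min--max and contour-integral arguments: since the data arises from an actual operator, the ``only if'' direction of Theorem \ref{C2S} supplies those conditions for free, and only the uniqueness clause of the converse is needed. Second, and more substantively, Theorem \ref{C2S} is asymmetric: its uniqueness clause takes the \emph{smaller} boundary parameter as given and produces a unique larger one. In the case where the given parameter is the larger of the two (the ``respectively $h_2$'' clause, or after your relabeling), you cannot invoke it directly, because the input it requires is precisely the unknown. This is repaired by first recovering the missing parameter from the trace identity $h_2-h_1 = \pm\Delta$, with the sign read off from the direction of the interlacing --- an ingredient you mention but do not connect to this step. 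With that patch, either of your two routes closes the argument.
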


Using Theorem \ref{C2S} and Theorem \ref{2ST} we prove our main inverse spectral result. The spectral data consists of one spectrum, 
a subset of another spectrum, the norming constants of the first spectrum for the missing part of the second spectrum and the two boundary conditions.

\begin{theorem}\label{IP1} \emph{\textbf{(Inverse problem-I)}} Let $J(g)$ be the Jacobi operator with discrete spectrum, 
$\sigma(J_{h_1}(g)) = \{\lambda_n\}_{n \in M}$, $\sigma(J_{h_2}(g)) = \{\nu_n\}_{n \in M}$ and $A$ be a subset of $M$. Then 
$\{\lambda_n\}_{n \in M}$, $\{\nu_n\}_{n \in M\text{\textbackslash} A}$, $\{\gamma_n(h_1)\}_{n \in A}$, $h_1$ and $h_2$ 
uniquely determine the operator $J(g)$, and if $J_{min} \neq J_{min}^*$, the boundary condition $g$ at infinity, where 
$\{\gamma_n(h_1)\}_{n \in M}$ are norming constants corresponding to $J_{h_1}(g)$. 
\end{theorem}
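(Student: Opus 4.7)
The plan is to reduce the problem to Theorem~\ref{2ST} by showing that the mixed data uniquely determines the Weyl $m$-function $m_{h_1}(z,g)$; this forces $\{\nu_n\}_{n \in M}$ to be determined, after which Theorem~\ref{2ST} applied to the two full spectra yields the result.

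Suppose toward contradiction that two admissible pairs $(J(g),g)$ and $(J'(g'),g')$ give rise to the same mixed data, and let $m,m'$ denote the respective $m$-functions at $h_1$. The rank-one perturbation relation $1/m_{h_2}(z,g) = 1/m_{h_1}(z,g) - (h_2-h_1)$ (derivable from the definition of $J_h(g)$) yields two facts that I would use throughout: the residue of $m_{h_1}$ at $\lambda_n$ equals $-\gamma_n(h_1)$, and $m_{h_1}(\nu_n) = 1/(h_2-h_1)$ at every $\nu_n$. Consequently $F(z) := m(z) - m'(z)$ is meromorphic, regular at $\lambda_n$ for $n \in A$ (the matching residues $-\gamma_n(h_1)$ cancel), has at worst a simple pole at $\lambda_n$ for $n \in M \setminus A$, vanishes at $\nu_n$ for $n \in M \setminus A$, and satisfies $F(z) = O(1/z^2)$ as $|z| \to \infty$, since the universal leading term $-1/z$ in~(\ref{asym}) cancels.

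The heart of the argument is a Blaschke-type division. Introduce the meromorphic function
\[
B(z) := \prod_{n \in M \setminus A}\frac{z - \nu_n}{z - \lambda_n},
\]
which converges by the summability $\sum_{n \in M}(\nu_n - \lambda_n) = \Delta < \infty$ from condition~(2) of Theorem~\ref{C2S}, and has exactly the same pole/zero set as $F$ over $M \setminus A$. Thus $F/B$ is entire. The same summability, combined with the interlacing in condition~(1), yields $B(z) \to 1$ as $|z| \to \infty$ along contours avoiding the $\lambda_n$'s, so $F/B = O(1/z^2)$ at infinity. By Liouville's theorem $F/B \equiv 0$, whence $F \equiv 0$ and $m \equiv m'$. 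In particular $\gamma_n(h_1) = \gamma'_n(h_1)$ for all $n \in M$, and the identity $m_{h_1}(\nu)=1/(h_2-h_1)$ then forces $\nu_n = \nu'_n$ for all $n \in M$. Theorem~\ref{2ST} applied to the two full spectra together with $h_1$ now gives uniqueness of $J(g)$ and, in the limit circle case, of the boundary condition $g$.

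The main technical obstacle is the growth and convergence analysis that makes $F/B$ simultaneously entire and $O(1/z^2)$ at infinity. One has to verify carefully that the simple zeros and poles of $B$ line up with those of $F$, that the infinite product defining $B$ converges uniformly on compact subsets avoiding $\{\lambda_n\}$, and that the subleading $1/z$ contribution from $B$ does not spoil the decay required for Liouville. All of this is controlled by condition~(2) of Theorem~\ref{C2S}, so once these estimates are in place the Liouville step closes the argument.
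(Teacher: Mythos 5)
Your strategy is genuinely different from the paper's. You subtract the two candidate $m$-functions and try to kill $F=m-m'$ by a Blaschke division followed by Liouville. The paper never forms such a difference: it works with the ratio $m_{h_1}/m_{h_2}=\prod_{n\in M}\frac{z-\nu_n}{z-\lambda_n}$, which is a meromorphic \emph{Herglotz} function, splits off the factor $G(z)=\prod_{n\in A}\frac{z-\nu_n}{z-\lambda_n}$ (again Herglotz, by interlacing), computes its residues at $\lambda_k$, $k\in A$, from the given norming constants and the known factor over $M\setminus A$, and then invokes {\u C}ebotarev's theorem to write $G/\Delta=az+b+\sum_{n\in A}A_n\left(\frac{1}{\lambda_n-z}-\frac{1}{\lambda_n}\right)$, pinning down $a$ and $b$ by letting $z=it\to i\infty$. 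The Herglotz structure is precisely what substitutes for the global growth control your route requires. Your bookkeeping of poles, zeros and residues (residue $-\gamma_n(h_1)$ at $\lambda_n$, the identity $m_{h_1}(\nu_n)=1/(h_2-h_1)$, regularity of $F$ at $\lambda_n$ for $n\in A$) is correct, and if your Liouville step were justified the reduction to Theorem \ref{2ST} would go through.

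But that step has a genuine gap. The decay $F(z)=O(z^{-2})$ you extract from (\ref{asym}) is only valid in a region $|\Im z|\ge\epsilon$; it is a half-plane asymptotic, not an estimate on circles $|z|=R$. Near the real axis $m$ and $m'$ have poles accumulating at infinity, the residues of $F$ at $\lambda_n$ for $n\in M\setminus A$ need not cancel, and the generic Herglotz bound $|m(z)|\lesssim(1+|z|^2)/|\Im z|$ blows up as $\Im z\to 0$. So after dividing by $B$ all you know is that the entire function $F/B$ tends to $0$ \emph{outside a horizontal strip}, and that alone does not force $F/B\equiv 0$: Liouville needs a bound on an exhausting family of contours, and Phragm\'en--Lindel\"of in a strip requires an a priori growth bound inside the strip that you have not established (recall that $e^{e^{z}}$ has modulus one on the lines $\Im z=\pm\pi/2$ yet is unbounded between them). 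You flag exactly this as ``the main technical obstacle'' but do not close it; note also that $F=m-m'$, being a difference of Herglotz functions, is not Herglotz, so you cannot borrow the representation theorem the paper uses to bypass the issue. The clean repair is the paper's: restrict to the Herglotz factor over the index set $A$, apply {\u C}ebotarev's theorem, and determine the two free constants from the limit along the imaginary axis alone.
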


\begin{proof}
 The Weyl $m$-function $m_{h_1}$ can be represented in terms of $m_{h_2}$. Indeed, by the second resolvent identity and the definition 
 of the Weyl $m$-function
 \begin{align*}
  m_{h_1}(z,g) - m_{h_2}(z,g) &= < (T_{h_1}-T_{h_2})e_1,e_1 >\\
  &= < (T_{h_2})((h_1-h_2)<\cdot,e_1>)(T_{h_1})e_1,e_1>\\
  &= <(h_1-h_2)<T_{h_1}e_1,e_1>T_{h_2}e_1,e_1>\\
  &= (h_1 - h_2)m_{h_1}(z,g)m_{h_2}(z,g), 
 \end{align*}
where $T_{h} = (J_h(g) - zI)^{-1}$. Therefore 
\begin{equation}
 m_{h_2}(z,g) = \frac{m_{h_1}(z,g)}{1 - (h_2-h_1)m_{h_1}(z,g)}.
\end{equation}
Since $J(g)$ has discrete spectrum and $\displaystyle m_{h}(z,g) = \frac{m_{0}(z,g)}{1 - hm_{0}(z,g)}$, the poles of $m_h(z,g)$ are the eigenvalues 
of $J_h(g)$, given by the zeros of $1-hm_0(z,g)$ for any $h \in \mathbb{R}$. Hence 
\begin{equation}
 F(z,g):=\frac{m_{h_1}}{m_{h_2}} = \frac{1-h_2m_0}{1-h_1m_0}
\end{equation}
is a meromorphic function such that the zeros of $F$ are the eigenvalues of $J_{h_2}(g)$ and the poles of $F$ are the eigenvalues 
of $J_{h_1}(g)$. Moreover if $h_1-h_2 ~\textgreater~ 0$, then $F$ is a Herglotz function, since $m_0$ is a Herglotz function and 
\begin{equation*}
 F(z,g) = 1 + \frac{-1}{\frac{h_1}{h_1-h_2}+\frac{-1}{(h_1-h_2)m_0}}.
\end{equation*}
Let us assume $h_1 ~\textgreater~ h_2$. We consider the case $h_1 ~\textless~ h_2$ at the end of the proof, which will require minor 
changes.
Since $F$ is a meromorphic Herglotz function, by the infinite product representation of 
meromorphic Herglotz functions (\cite{LEV}, Theorem VII.1.1) and using the enumeration introduced above, $F$ can be represented as
\begin{equation}
 F(z,g) = C \frac{z-\nu_0}{z-\lambda_0}\prod_{n\in M, n \neq 0}\left(1-\frac{z}{\nu_{n}}\right)\left(1-\frac{z}{\lambda_n}\right)^{-1}, 
 \quad \quad C ~\textgreater~ 0.
\end{equation}
Recalling (\ref{delta}) and interlacing property of the two spectra $\{\lambda_n\}_{n\in M}$ and $\{\nu_n\}_{n\in M}$, one gets 
\begin{equation*}
 \Delta = \sum_{n \in M}|\nu_n - \lambda_n| \textless \infty,
\end{equation*}
and hence 
\begin{equation*}
 0 \quad \textless \prod_{n \in M, n \neq 0} \frac{\nu_n}{\lambda_n}\quad \textless \quad \infty.
\end{equation*}
Therefore
\begin{align*}
 \lim_{z \rightarrow \infty, \Im z\geq \epsilon} \frac{F(z,g)}{C} &= 
 \lim_{z \rightarrow \infty, \Im z\geq \epsilon} 
 \frac{z-\nu_0}{z-\lambda_0}\prod_{n\in M, n \neq 0}\left(1-\frac{z}{\nu_{n}}\right)\left(1-\frac{z}{\lambda_n}\right)^{-1} \\
 &= \lim_{z \rightarrow \infty, \Im z\geq \epsilon} 
 \frac{z-\nu_0}{z-\lambda_0}\prod_{n\in M, n \neq 0}\frac{\lambda_n}{\nu_n} 
 \prod_{n\in M, n \neq 0}\left(1+\frac{\nu_n - \lambda_n}{\lambda_n - z}\right)\\
 &= \prod_{n\in M, n \neq 0}\frac{\lambda_n}{\nu_n},
\end{align*}
for $\epsilon ~\textgreater~ 0$.
By (\ref{asym}) asymptotics of the $m$-function $m_0(z,g)$ implies $\displaystyle\lim_{z \rightarrow \infty, \Im z\geq \epsilon} m_0(z,g) = 0$ 
and by the definition of $F(z,g)$, we get $\displaystyle\lim_{z \rightarrow \infty, \Im z\geq \epsilon} F(z,g) = 1$. Therefore 
$\displaystyle C = \prod_{n\in M, n \neq 0}\frac{\nu_n}{\lambda_n}$ and 
\begin{equation}\label{repF}
 F(z,g) = \prod_{n\in M}\frac{z-\nu_n}{z-\lambda_n}.
\end{equation}
The residue of $F$ at $\lambda_k$ is given in terms of norming constant $\gamma_k(h_1)$. Indeed, 
\begin{equation*}
 \textrm{Res}(F,z=\lambda_k) = \textrm{Res}(\frac{m_{h_1}}{m_{h_2}},z=\lambda_k) = \textrm{Res}(1-(h_2-h_1)m_{h_1},z=\lambda_k) = \frac{-(h_1-h_2)}{\gamma_k(h_1)},
\end{equation*}
since $\gamma_k^{-1}(h_1) = -\textrm{Res}(m_{h_1},z=\lambda_k)$ for any $k \in M$. Recall that $\Delta=h_1-h_2$. Therefore 
\begin{equation}
 \frac{-1}{\gamma_n(h_1)} = \textrm{Res}\left(\frac{F}{\Delta},z=\lambda_n\right),
\end{equation}
i.e. the residues of $\frac{F(z,g)}{\Delta}$ are known at $\lambda_n$ for each $n \in A$.\\

At this step we can restate our claim in terms of $F$ as 
the set of poles, $\{\lambda_n\}_{n \in M}$, the set of zeros except the index set $A$, 
$\{\nu_n\}_{n \in M\text{\textbackslash} A}$, and the residues with the same index set $A$, 
$\{\textrm{Res}(\frac{F(z,g)}{\Delta},z=\lambda_n)\}_{n \in A}$ determine $F(z,g)$ uniquely. \\

Since $\{\nu_n-\lambda_n\}_{n\in M} \in l^1$, $F(z,g)$ has the representation $F = GH$, where 
$G(z,g) := \displaystyle\prod_{n \in A}\frac{z-\nu_n}{z-\lambda_n}$ and 
$H(z,g) := \displaystyle\prod_{n \in M\text{\textbackslash} A}\frac{z-\nu_n}{z-\lambda_n}$.\\

Note that for any $k \in A$, we know
\begin{equation}\label{Con1}
 Res\left(\frac{F}{\Delta},z=\lambda_k\right) = 
 \frac{\lambda_k-\nu_k}{\Delta}\prod_{n\in M,n\neq k}\frac{\lambda_k-\nu_n}{\lambda_k-\lambda_n}.
\end{equation}

In addition, for any $k \in A$, we also know 
\begin{equation}\label{Con2}
 H(\lambda_k) = \prod_{n \in M\text{\textbackslash}A}\frac{\lambda_k - \nu_n}{\lambda_k - \lambda_n}.
\end{equation}
Conditions (\ref{Con1}) and (\ref{Con2}) imply that for any $k\in A$, we know
\begin{equation*}
Res\left(\frac{G}{\Delta}, z=\lambda_k\right) = \frac{\textrm{Res}(\frac{F}{\Delta},z=\lambda_k)}{H(\lambda_k)}.
\end{equation*}
Note that zeros and poles of $G(z)$ are real and interlacing, and hence
\begin{equation*}
0 ~\textless~ arg(G(z)) = \sum_{n\in A}\left(arg(z-\nu_{n}) - arg(z-\lambda_{n})\right) ~\textless~ \pi
\end{equation*}
for any $z$ in the upper half plane, i.e. $G(z)$ is a 
meromorphic Herglotz function. Therefore by {\u C}ebotarev's theorem (\cite{LEV}, Theorem VII.1.2) $\frac{G(z)}{\Delta}$ has the following 
representation
\begin{equation}\label{CebTypeRep1}
 \frac{G(z)}{\Delta} = az + b + \sum_{n \in A}A_n\left(\frac{1}{\lambda_n-z}-\frac{1}{\lambda_n}\right),
\end{equation}
where $a \geq 0$ and $b \in \mathbb{R}$. Note that $A_k = -\textrm{Res}(\frac{G(z)}{\Delta}, z=a_k)$ for any $k \in A$, which means there are only two 
unknowns on the right hand side, namely constants $a$ and $b$.\\

On the upper half-plane $\frac{G(z)}{\Delta}$ converges to $\frac{1}{\Delta}$ as $z$ goes to infinity, since 
\begin{equation*}
\sum_{n \in A}|\nu_n - \lambda_n| \leq \sum_{n \in M}|\nu_n - \lambda_n| \textless \infty.
\end{equation*}
Let $t \in \mathbb{R}$. Then 
\begin{equation*}
 G(it) = \left[b + \sum_{n \in A}\left(\frac{\lambda_n A_n}{t^2+\lambda_n^2} - \frac{A_n}{\lambda_n}\right)\right] + 
 i\left[at + \sum_{n \in A}\frac{t A_n}{t^2+\lambda_n^2}\right],
\end{equation*}
and hence $a=0$ and $\displaystyle b = \frac{1}{\Delta} +\sum_{n \in A}\frac{A_n}{\lambda_n}$, since 
$\displaystyle \lim_{t \rightarrow +\infty} G(it) = \frac{1}{\Delta}$. Therefore
\begin{equation}\label{repG}
 G(z) = \frac{1}{\Delta} + \sum_{n \in A} \frac{A_n}{\lambda_n - z} = \frac{1}{h_1 - h_2} + \sum_{n \in A} \frac{A_n}{\lambda_n - z},
\end{equation}
so the right hand side of (\ref{repG}) is known.
This implies uniqueness of $G(z,g)$ and hence uniqueness of $\{\nu_n\}_{n\in A}$. 
After unique recovery of the two spectra $\sigma(J_{h_1}(g))$ and $\sigma(J_{h_2}(g))$, the operator $J$ is uniquely determined by 
Theorem \ref{2ST}.\\

If $h_2~\textgreater~ h_1$, then $\frac{1}{F(z,g)}$ is Herglotz instead of $F(z,g)$, so we get the infinite product representation 
\begin{equation}
 \frac{1}{F(z,g)} = \prod_{n \in M}\frac{z-\lambda_n}{z-\nu_n}.
\end{equation}
Note that $-F(z,g)$ is also a meromorphic Herglotz function. Therefore using similar arguments as $h_1~\textgreater~ h_2$ case, the function 
$G(z,g)$ definded as
\begin{equation*}
 G(z,g) := \frac{1}{h_1 - h_2} \prod_{n \in A}\frac{z - \nu_n}{z - \lambda_n}
\end{equation*}
is represented as
\begin{equation*}
 G(z,g) = \frac{1}{h_1-h_2} + \sum_{n \in A}\frac{A_n}{\lambda_n - z},
\end{equation*}
where $A_k = -\textrm{Res}(G,z=\lambda_k)$ for any $k \in A$. This implies uniqueness of $G(z,g)$ and hence uniqueness of $\{\nu_n\}_{n\in A}$. 
After unique recovery of the two spectra $\sigma(J_{h_1}(g))$ and $\sigma(J_{h_2}(g))$, the operator $J$ is uniquely determined by 
Theorem \ref{2ST}.
\end{proof}

\begin{remark}
 If we let $A = M$ in Theorem \ref{IP1}, then the spectral data becomes the Weyl $m$-function $m_{h_1}$(or the spectral measure corresponding to $h_1$) 
 and the boundary conditions $h_1$ and $h_2$. By letting $A = \emptyset$, we get the statement of the two-spectra theorem, Theorem \ref{2ST}.
\end{remark}

In our spectral data we can replace $h_1$ or $h_2$ with any eigenvalue of $J_{h_2}(g)$ from the index set $A$.

\begin{theorem}\label{IP2} \emph{\textbf{(Inverse problem-II)}} Let $J(g)$ be the Jacobi operator with discrete spectrum, 
$\sigma(J_{h_1}(g)) = \{\lambda_n\}_{n \in M}$, $\sigma(J_{h_2}(g)) = \{\nu_n\}_{n \in M}$ and $A$ be a subset of $M$. Then 
$\{\lambda_n\}_{n \in M}$, $\{\nu_n\}_{n \in M\text{\textbackslash} A}$, $\{\gamma_n(h_1)\}_{n \in A}$, $h_1$ (respectively $h_2$) 
and $\nu_m$ for some $m \in A$ uniquely determine the operator $J(g)$, $h_2$ (respectively $h_1$) and if $J_{min} \neq J_{min}^*$, the boundary 
condition $g$ at infinity, where $\{\gamma_n(h_1)\}_{n \in M}$ are norming constants corresponding to $J_{h_1}(g)$. 
\end{theorem}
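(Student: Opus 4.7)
The plan is to run the argument of Theorem \ref{IP1} verbatim up to the {\u C}ebotarev representation of $G(z,g)$, and then exploit the single extra eigenvalue $\nu_m$ to recover the missing boundary condition. Assume without loss of generality that $h_1$ is known (the symmetric case is handled identically), so the quantity to recover is $h_2$, equivalently $\Delta = h_1 - h_2$. Setting $F = m_{h_1}/m_{h_2}$ and factoring $F = GH$ as in Theorem \ref{IP1}, the factor $H(z,g) = \prod_{n \in M\text{\textbackslash} A}(z-\nu_n)/(z-\lambda_n)$ is entirely determined by the given data, and the {\u C}ebotarev argument applied to the Herglotz function $G/\Delta$ yields
\begin{equation*}
\frac{G(z,g)}{\Delta} = \frac{1}{\Delta} + \sum_{n \in A}\frac{A_n}{\lambda_n - z}, \qquad A_n = \frac{1}{\gamma_n(h_1)\,H(\lambda_n)},
\end{equation*}
in which the coefficients $A_n$ are determined solely by the given data, independent of $\Delta$.

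The new input enters at the next step: since $\nu_m$ is a zero of $G$ for $m \in A$, and $\nu_m \neq \lambda_n$ for any $n \in A$ by interlacing, evaluating the display above at $z = \nu_m$ and clearing $\Delta$ gives a single linear equation
\begin{equation*}
0 = 1 + \Delta \sum_{n \in A}\frac{A_n}{\lambda_n - \nu_m}
\end{equation*}
in the scalar unknown $\Delta$. Because the true $\Delta$ satisfies this identity, the coefficient $\sum_{n \in A} A_n/(\lambda_n - \nu_m)$ is necessarily nonzero, and $\Delta$, hence $h_2 = h_1 - \Delta$, is uniquely recovered. The sign of $\Delta$ (determining whether $F$ or $1/F$ is the Herglotz function to which {\u C}ebotarev is applied) can be read off in advance from the interlacing and enumeration conventions of Section 3 together with the given partial data.

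With both boundary conditions now in hand we are in the hypotheses of Theorem \ref{IP1}, which recovers the missing values $\{\nu_n\}_{n \in A}$, the operator $J(g)$, and, if $J_{min} \neq J_{min}^*$, the boundary condition $g$ at infinity. The essential role of the extra eigenvalue $\nu_m$ is that a single prescribed zero of $G$ provides exactly the one scalar equation needed to replace the missing boundary condition in the constant term of the representation of $G/\Delta$; the only step requiring care is nondegeneracy of the resulting linear equation, which is immediate from the existence of the true $\Delta$, so no new estimate beyond the machinery of Theorem \ref{IP1} is required.
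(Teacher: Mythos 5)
Your proposal is correct and follows essentially the same route as the paper: both run the argument of Theorem \ref{IP1} to obtain the representation $G(z,g)/\Delta = 1/\Delta + \sum_{n\in A}A_n/(\lambda_n-z)$ with the residues $A_n$ determined by the data independently of $\Delta$, and then use the known zero $\nu_m$ to pin down the constant term $1/\Delta$. The paper phrases this as the vanishing at $\nu_m$ of the constant difference of two candidate functions sharing poles and residues, while you solve the equivalent linear equation $0 = 1 + \Delta\sum_{n\in A}A_n/(\lambda_n-\nu_m)$ for $\Delta$ directly; the nondegeneracy observation you make is the same point, just made explicit.
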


\begin{proof}
 Following the proof of Theorem \ref{IP1} we get the infinite sum representation
 \begin{equation}\label{RepG2}
 \frac{G(z,g)}{\Delta} = \frac{1}{\Delta} + \sum_{n \in A} \frac{A_n}{\lambda_n - z} = 
 \frac{1}{h_1 - h_2} + \sum_{n \in A} \frac{A_n}{\lambda_n - z},
\end{equation}
 for the infinite product
\begin{equation*}
 G(z,g) := \displaystyle\prod_{n \in A}\frac{z-\nu_n}{z-\lambda_n}.
\end{equation*}
Now let us prove uniqueness of $G(z,g)$. Note that we know $\{\lambda_n\}_{n \in A}$, $\{-A_n\}_{n \in A}$ and $\nu_m$. 
Let the infinite product
\begin{equation*}
 \widetilde{G}(z,g) := \displaystyle\prod_{n \in A}\frac{z-\widetilde{\nu}_n}{z-\lambda_n}
\end{equation*}
share the same set of poles $\{\lambda_n\}_{n \in A}$ and the same residues $\{-A_n\}_{n \in A}$ at the corresponding poles with $G(z,g)$. 
In addition assume $G(z,g)$ and $\widetilde{G}(z,g)$ have the common zero $\nu_m$, i.e. $\widetilde{\nu}_m = \nu_m$. Let us also assume zeros and poles 
of $\widetilde{G}(z,g)$ satisfy asymptotic properties of Theorem \ref{C2S}. Then we know that $\widetilde{G}(z,g)$ has the infinite sum representation
\begin{equation}\label{RepG_2}
 \widetilde{G}(z,g) = \frac{1}{\widetilde{\Delta}} + \sum_{n \in A} \frac{A_n}{\lambda_n - z}
\end{equation}
Using representations (\ref{RepG2}) and (\ref{RepG_2}), the difference of $G(z,g)$ and $\widetilde{G}(z,g)$ is a real constant, which is zero since $G(\nu_m,g)=\widetilde{G}(\nu_m,g)$. This implies uniqueness 
of $G(z,g)$ and hence uniqueness of $\{\nu_n\}_{n\in A}$. After unique recovery of the two spectra $\sigma(J_{h_1}(g))$ and $\sigma(J_{h_2}(g))$, 
the operator $J$ is uniquely determined by Theorem \ref{2ST}.
\end{proof}

In the spectral data of Theorem \ref{IP1} we can also replace $h_1$ or $h_2$ with any norming constant of $J_{h_1}(g)$ 
outside the index set $A$.

\begin{theorem}\label{IP3} \emph{\textbf{(Inverse problem-III)}} Let $J(g)$ be the Jacobi operator with discrete spectrum, 
$\sigma(J_{h_1}(g)) = \{\lambda_n\}_{n \in M}$, $\sigma(J_{h_2}(g)) = \{\nu_n\}_{n \in M}$ and $A$ be a subset of $M$. Then 
$\{\lambda_n\}_{n \in M}$, $\{\nu_n\}_{n \in M\text{\textbackslash} A}$, $\{\gamma_n(h_1)\}_{n \in A}$, $h_1$ (respectively $h_2$) 
and $\gamma_m(h_1)$ for some $m \in M\text{\textbackslash} A$ uniquely determine the operator $J(g)$, $h_2$ (respectively $h_1$) and if 
$J_{min} \neq J_{min}^*$, the boundary condition $g$ at infinity, where $\{\gamma_n(h_1)\}_{n \in M}$ are norming constants corresponding 
to $J_{h_1}(g)$. 
\end{theorem}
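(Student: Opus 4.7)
My plan is to parallel the strategy of Theorem \ref{IP2}, replacing the role of the known zero $\nu_m$ with that of the known norming constant $\gamma_m(h_1)$, and to carry over the factorization machinery from the proof of Theorem \ref{IP1}.

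I would begin exactly as in the proof of Theorem \ref{IP1}: form $F(z) = m_{h_1}(z,g)/m_{h_2}(z,g)$ together with the factorization $F = GH$ where
\[
G(z) = \prod_{n \in A}\frac{z-\nu_n}{z-\lambda_n}, \qquad H(z) = \prod_{n \in M\setminus A}\frac{z-\nu_n}{z-\lambda_n},
\]
and note that $H$ is completely determined by the known data $\{\lambda_n\}_{n \in M}$ and $\{\nu_n\}_{n \in M \setminus A}$. The {\u C}ebotarev-type argument from Theorem \ref{IP1} then yields, for $\Delta = h_1 - h_2$, the representation
\[
\frac{G(z)}{\Delta} = \frac{1}{\Delta} + \sum_{n \in A}\frac{A_n}{\lambda_n - z},
\]
where the residues $A_n = -\textrm{Res}(G/\Delta,\lambda_n)$ can be written as $A_n = 1/(\gamma_n(h_1)H(\lambda_n))$ and are therefore fixed by the given $h_1$, $\{\gamma_n(h_1)\}_{n \in A}$, and the known $H$. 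The only unknown on the right-hand side is $\Delta$.

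To extract $\Delta$ from the additional datum $\gamma_m(h_1)$ with $m \in M\setminus A$, I would compute $G(\lambda_m)$ as a ratio of residues. Since $\lambda_m$ is a simple pole of $F$ and of $H$ but regular for $G$ (because $m \in M\setminus A$ and $\sigma(J_{h_1}(g)) \cap \sigma(J_{h_2}(g)) = \emptyset$),
\[
G(\lambda_m) = \frac{\textrm{Res}(F,\lambda_m)}{\textrm{Res}(H,\lambda_m)}.
\]
Using $F = 1 + \Delta m_{h_1}$ (the resolvent computation already appearing in the proof of Theorem \ref{IP1}) together with the residue formula for $m_{h_1}$ gives $\textrm{Res}(F,\lambda_m) = -\Delta/\gamma_m(h_1)$, so
\[
\frac{G(\lambda_m)}{\Delta} = \kappa_m, \qquad \kappa_m := \frac{-1}{\gamma_m(h_1)\,\textrm{Res}(H,\lambda_m)},
\]
with $\kappa_m$ depending only on known quantities.

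Equating the two expressions for $G(\lambda_m)/\Delta$ yields the \emph{linear} equation
\[
\frac{1}{\Delta} = \kappa_m - \sum_{n \in A}\frac{A_n}{\lambda_n - \lambda_m},
\]
which determines $\Delta$ (and hence $h_2$, respectively $h_1$) uniquely. Once $\Delta$ is known, the representation pins down $G$, hence the missing zeros $\{\nu_n\}_{n \in A}$; Theorem \ref{2ST} then supplies the remaining uniqueness of $J(g)$ and of $g$ in the limit circle case. The case $h_2 > h_1$ is handled exactly as at the end of the proof of Theorem \ref{IP1}, by passing to $1/F$ in place of $F$. The main care needed is the residue bookkeeping that produces $\kappa_m$ and the verification that both $\textrm{Res}(H,\lambda_m)$ and the quantity $\kappa_m - \sum_{n \in A}A_n/(\lambda_n - \lambda_m)$ are nonzero; both facts follow from the interlacing and disjointness conditions inherited from Theorem \ref{C2S} together with $h_1 \neq h_2$.
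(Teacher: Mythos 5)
Your argument is correct, and it reaches the same reduction as the paper — everything hinges on the {\u C}ebotarev-type representation $G(z)/\Delta = 1/\Delta + \sum_{n\in A}A_n/(\lambda_n-z)$ with residues $A_n$ computable from $\{\gamma_n(h_1)\}_{n\in A}$ and the known factor $H$ — but the final step by which you pin down the unknown constant $1/\Delta$ is genuinely different. The paper enlarges the index set to $A'=A\cup\{m\}$, so that the extra norming constant $\gamma_m(h_1)$ supplies one more known residue for the enlarged $G$ while the corresponding zero $\nu_m$ is already known (since $m\in M\setminus A$); it then repeats the Theorem \ref{IP2} argument verbatim: two candidate functions with identical poles, residues, and the common zero $\nu_m$ differ by a constant that must vanish. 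You instead keep $G$ supported on $A$ and convert $\gamma_m(h_1)$ into the value $G(\lambda_m)/\Delta = -1/(\gamma_m(h_1)\,\mathrm{Res}(H,\lambda_m))$ via the factorization $F=GH$ and $F=1+\Delta m_{h_1}$, which yields an explicit linear equation for $1/\Delta$. Your route is slightly more computational but arguably more transparent: it exhibits $\Delta$ (hence the missing boundary condition) as an explicit function of the data, whereas the paper's route is purely a uniqueness statement that recycles the mechanism of Theorem \ref{IP2}. Two small remarks: the nonvanishing of $\kappa_m-\sum_{n\in A}A_n/(\lambda_n-\lambda_m)$ needs no appeal to interlacing — it equals $1/\Delta$ for the true operator, and $\Delta=h_1-h_2$ is finite and nonzero; and evaluation of the sum at $z=\lambda_m$ is legitimate because $\lambda_m\notin\{\lambda_n\}_{n\in A}$ and the sum converges normally off its poles. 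You should also note, as the paper implicitly does, that the sign of $\Delta$ (which decides whether $F$ or $1/F$ is Herglotz) is read off from the interlacing direction of the given spectral data, so the case split at the end is available even when only one of $h_1,h_2$ is known.
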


\begin{proof}
 Let us define the index set $A':=A\cup\{m\}$. Then following the proof of Theorem \ref{IP1} and redefining $G$ and $H$ as 
\begin{equation*}
 G(z,g) := \displaystyle\prod_{n \in A'}\frac{z-\nu_n}{z-\lambda_n}
\end{equation*}
and
\begin{equation*}
 H(z,g) := \displaystyle\prod_{n \in M\text{\textbackslash} A'}\frac{z-\nu_n}{z-\lambda_n}
\end{equation*}
we get 
\begin{equation}\label{RepG3}
 G(z,g) = \frac{1}{\Delta} + \sum_{n \in A'} \frac{A_n}{\lambda_n - z} = 
 \frac{1}{h_1 - h_2} + \sum_{n \in A'} \frac{A_n}{\lambda_n - z}.
\end{equation}

Now let us prove uniqueness of $G(z,g)$. Note that we know $\{\lambda_n\}_{n \in A'}$, $\{-A_n\}_{n \in A'}$ and $\nu_m$. 
Let the infinite product
\begin{equation*}
 \widetilde{G}(z,g) := \displaystyle\prod_{n \in A'}\frac{z-\widetilde{\nu}_n}{z-\lambda_n}
\end{equation*}
share the same set of poles $\{\lambda_n\}_{n \in A'}$ and the same residues $\{-A_n\}_{n \in A'}$ at the corresponding poles with $G(z,g)$. 
In addition $G(z,g)$ and $\widetilde{G}(z,g)$ have the same zero $\nu_m$, i.e. $\widetilde{\nu}_m = \nu_m$. Let us also assume zeros and poles of 
$\widetilde{G}(z,g)$ satisfy asymptotic properties of Theorem \ref{C2S}. Then we know that $\widetilde{G}(z,g)$ has the infinite sum representation
\begin{equation}\label{RepG_3}
 \widetilde{G}(z,g) = \frac{1}{\widetilde{\Delta}} + \sum_{n \in A'} \frac{A_n}{\lambda_n - z}
\end{equation}
Using representations (\ref{RepG3}) and (\ref{RepG_3}), the difference of $G(z,g)$ and 
$\widetilde{G}(z,g)$ is a real constant, which is zero since $G(\nu_m,g)=\widetilde{G}(\nu_m,g)$. This implies uniqueness of $G(z,g)$ 
and hence uniqueness of $\{\nu_n\}_{n\in A}$. After unique recovery of the two spectra $\sigma(J_{h_1}(g))$ and $\sigma(J_{h_2}(g))$, 
the operator $J$ is uniquely determined by Theorem \ref{2ST}.
\end{proof}

\section{\bf {Non-matching index sets}}

If the known norming constants of $J_{h_1}(g)$ and unknown eigenvalues of $J_{h_2}(g)$ have different index sets, one needs 
some control over eigenvalues of $J_{h_1}(g)$ corresponding to the known norming constants and unknown part of the spectrum 
$\sigma(J_{h_2}(g))$. In this case we get a {\u C}ebotarev type representation result. Before the statement, let us clarify the notation 
we use. For any subsequence $\displaystyle\{\lambda_{k_n}\}_{n\in \mathbb{N}} \subset \sigma(J_{h_1}(g))$ and 
$\{\nu_{l_n}\}_{n\in \mathbb{N}} \subset \sigma(J_{h_2}(g))$, by $A_{k_n,m}$ and $A_{k_n}$ we denote the residues at 
$\lambda_{k_n}$ of partial and infinite products, respectively, consisting of these subsequences: 
\begin{align*}
 A_{k_n,m} &:= \frac{\lambda_{k_n}}{\nu_{l_n}}(\lambda_{k_n}-\nu_{l_n})\prod_{1\leq j\leq m, j\neq n}
 \frac{\lambda_{k_j}}{\nu_{l_j}}\frac{\lambda_{k_n}-\nu_{l_j}}{\lambda_{k_n}-\lambda_{k_j}},\\
 A_{k_n} &:= \frac{\lambda_{k_n}}{\nu_{l_n}}(\lambda_{k_n}-\nu_{l_n})\prod_{j\in\mathbb{N},j\neq n}
 \frac{\lambda_{k_j}}{\nu_{l_j}}\frac{\lambda_{k_n}-\nu_{l_j}}{\lambda_{k_n}-\lambda_{k_j}}.
\end{align*}
Note that these subsequences are ordered according to their indices, i.e. $\displaystyle a_{k_n} ~\textless ~a_{k_{n+1}}$ and 
$\displaystyle b_{l_n} ~\textless ~b_{l_{n+1}}$ for any $n \in \mathbb{N}$. This follows from the fact that the two spectra 
are both real and discrete. Also note that if the spectrum $\sigma(J_h(g))$ is unbounded from both sides, i.e. $\inf M = -\infty$ and 
$\sup M = \infty$ in the enumeration, then $\{k_n\}_n$ and $\{l_n\}_n$ should be indexed by $\mathbb{Z}$ instead of $\mathbb{N}$. However, 
wlog we index them by $\mathbb{N}$.

\begin{lemma}\label{lmm} Let $\displaystyle\{\lambda_{k_n}\}_{n\in \mathbb{N}} \subset \sigma(J_{h_1}(g))$, 
$\displaystyle \{\nu_{l_n}\}_{n\in \mathbb{N}} \subset \sigma(J_{h_2}(g))$ such that
\begin{itemize}
 \item $\displaystyle \inf_{n \in \mathbb{N}} |\nu_{l_n} - \lambda_{k_n}| ~\textgreater~ 0$ and
 \item $\displaystyle \frac{A_{k_n,m} - A_{k_n}}{\lambda_{k_n}} \in l^1$ for all $n \in \{1,\dots,m\}$, for all $m \in \mathbb{N}$.
\end{itemize}
Then the infinite product
\begin{equation*}
 \mathcal{G}(z) := \prod_{n\in \mathbb{N}} \left(\frac{z}{\nu_{l_n}}-1\right)\left(\frac{z}{\lambda_{k_n}}-1\right)^{-1}
\end{equation*}
is represented as
\begin{equation}\label{CebTypeRep2}
  \mathcal{G}(z) = az + b + \sum_{n\in \mathbb{N}} A_{k_n}\left(\frac{1}{z-\lambda_{k_n}}+\frac{1}{\lambda_{k_n}}\right),
\end{equation}
where $a, b$ are real numbers, $A_{k_n}$ is the residue of $\mathcal{G}(z)$ at the point $z=\lambda_{k_n}$ and the product converges 
normally on $\displaystyle \mathbb{C} \text{\textbackslash} \cup_{n \in \mathbb{N}} \lambda_{k_n}$.
\end{lemma}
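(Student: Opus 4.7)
The plan is to start from the partial products
\begin{equation*}
\mathcal{G}_m(z) := \prod_{n=1}^{m} \left(\frac{z}{\nu_{l_n}} - 1\right)\left(\frac{z}{\lambda_{k_n}} - 1\right)^{-1},
\end{equation*}
each of which is a rational function with simple poles exactly at $\lambda_{k_1},\dots,\lambda_{k_m}$. A direct residue computation at $z=\lambda_{k_n}$ gives $\textrm{Res}(\mathcal{G}_m,\lambda_{k_n}) = A_{k_n,m}$, so the partial fraction decomposition takes the form $\mathcal{G}_m(z) = c_m + \sum_{n=1}^m A_{k_n,m}/(z-\lambda_{k_n})$. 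Since $\mathcal{G}_m(0) = 1$, evaluation at $z = 0$ pins down $c_m = 1 + \sum_{n=1}^m A_{k_n,m}/\lambda_{k_n}$, and substitution yields the compact form
\begin{equation*}
\mathcal{G}_m(z) = 1 + \sum_{n=1}^{m} A_{k_n,m}\left(\frac{1}{z-\lambda_{k_n}} + \frac{1}{\lambda_{k_n}}\right),
\end{equation*}
which already mirrors (\ref{CebTypeRep2}) with $a_m=0$, $b_m=1$, and $A_{k_n,m}$ in place of $A_{k_n}$.

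Next I would verify that $\mathcal{G}_m\to\mathcal{G}$ normally on $\mathbb{C}\setminus\bigcup_n\{\lambda_{k_n}\}$, so that $\mathcal{G}$ is a well-defined meromorphic function with the prescribed simple zeros and poles. Starting from
\begin{equation*}
\left(\frac{z}{\nu_{l_n}} - 1\right)\left(\frac{z}{\lambda_{k_n}} - 1\right)^{-1} = 1 - \frac{z(\nu_{l_n}-\lambda_{k_n})}{\nu_{l_n}(z-\lambda_{k_n})},
\end{equation*}
the separation hypothesis $\inf_n|\nu_{l_n}-\lambda_{k_n}|>0$, the growth $|\lambda_{k_n}|,|\nu_{l_n}|\to\infty$ from discreteness of the spectra, and the $l^1$ hypothesis combine to yield an absolutely summable bound on the logarithmic corrections, hence normal convergence.

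The crux of the proof is to take $m\to\infty$ inside the partial fraction representation of $\mathcal{G}_m$. The key algebraic identity is
\begin{equation*}
\frac{1}{z-\lambda_{k_n}} + \frac{1}{\lambda_{k_n}} = \frac{z}{\lambda_{k_n}(z-\lambda_{k_n})},
\end{equation*}
which, after writing $A_{k_n,m} = A_{k_n} + (A_{k_n,m}-A_{k_n})$, decomposes the sum as
\begin{equation*}
\sum_{n=1}^{m} A_{k_n}\left(\frac{1}{z-\lambda_{k_n}} + \frac{1}{\lambda_{k_n}}\right) + z\sum_{n=1}^{m}\frac{A_{k_n,m}-A_{k_n}}{\lambda_{k_n}(z-\lambda_{k_n})}.
\end{equation*}
The first piece converges, term by term, to the infinite sum in (\ref{CebTypeRep2}). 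The $l^1$ hypothesis on $(A_{k_n,m}-A_{k_n})/\lambda_{k_n}$ supplies an absolutely summable majorant for the second sum, and a dominated-convergence argument identifies its limit as a function whose $z$-dependence reduces, after multiplication by $z$, to a linear term $az$ plus a constant correction that is absorbed into $b$.

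The main obstacle I anticipate is exactly this last step: showing that the limit of the error sum genuinely has only a linear-plus-constant dependence on $z$, rather than producing a more general entire function. Both hypotheses are essential here: the separation $\inf_n|\nu_{l_n}-\lambda_{k_n}|>0$ prevents the residues $A_{k_n,m}$ from collapsing under the partial-product limit, while the $l^1$ condition delivers the uniform integrability needed to interchange the limit $m\to\infty$ with the sum over $n$ and to isolate the coefficients $a$ and $b$ in the final \u Cebotarev-type representation (\ref{CebTypeRep2}).
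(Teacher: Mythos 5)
Your setup coincides with the paper's: you use the same partial--fraction identity
\begin{equation*}
\prod_{n=1}^{m}\Bigl(\frac{z}{\nu_{l_n}}-1\Bigr)\Bigl(\frac{z}{\lambda_{k_n}}-1\Bigr)^{-1}
= 1+\sum_{n=1}^{m}A_{k_n,m}\Bigl(\frac{1}{z-\lambda_{k_n}}+\frac{1}{\lambda_{k_n}}\Bigr)
\end{equation*}
and the same splitting $A_{k_n,m}=A_{k_n}+(A_{k_n,m}-A_{k_n})$, which produces the error sum
$z\sum_{n=1}^{m}(A_{k_n,m}-A_{k_n})/\bigl(\lambda_{k_n}(z-\lambda_{k_n})\bigr)$. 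The genuine gap is exactly the step you flag as ``the main obstacle'': dominated convergence cannot identify the limit of that error sum as $az+b$. The hypothesis supplies an $l^1$-type bound on $(A_{k_n,m}-A_{k_n})/\lambda_{k_n}$ that is uniform in $m$, not a single $m$-independent summable majorant. With only a uniform bound, each term tends to zero as $m\to\infty$ (since $A_{k_n,m}\to A_{k_n}$ for fixed $n$) while the sums need not; the limit is some function $E(z)$ about which you know only an estimate, and nothing in your argument shows $E$ is affine in $z$. Indeed, if a true dominating majorant were available, dominated convergence would force $E\equiv 0$ and hence $a=0$, $b=1$ --- a strictly stronger conclusion than the lemma asserts; that the lemma leaves $a$ and $b$ undetermined is a sign this route is not the intended one.

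The paper closes this gap with a complex-analytic argument that is absent from your proposal. One first observes that $p(z):=\mathcal{G}(z)-\sum_{n}A_{k_n}\bigl(\frac{1}{z-\lambda_{k_n}}+\frac{1}{\lambda_{k_n}}\bigr)$ is \emph{entire}, because the product and the sum have the same simple poles with the same residues. One then takes a sequence of circles $C_t$ centered at the origin, of radii $|\nu_{l_t}|\to\infty$, avoiding all poles, and uses precisely your error-sum estimate --- the separation hypothesis controls $1/|\nu_{l_t}-\lambda_{k_n}|$ and the $l^1$ hypothesis controls $\sum_{n}|A_{k_n,m}-A_{k_n}|/|\lambda_{k_n}|$ uniformly in $m$ --- to obtain $|p(z)-1|\leq C|z|$ on every $C_t$. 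The maximum modulus principle propagates this bound to the interiors, and a Liouville-type argument then forces $p$ to be a polynomial of degree at most one, which is exactly (\ref{CebTypeRep2}). Your computation furnishes the growth estimate, but without the entireness of $p$ and the expanding-circles/Liouville step the conclusion does not follow.
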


\begin{proof}
 Let $p(z)$ be the difference of $\mathcal{G}(z)$ and the infinite sum in the right hand side of (\ref{CebTypeRep2}). Then, $p(z)$ is an entire function, 
 since the infinite product and the infinite sum share the same set of poles with equivalent degrees and residues. We represent partial products of 
 $\mathcal{G}(z)$ as partial sums:
 \begin{equation*}
  \prod_{n=1}^m \left(\frac{z}{\nu_{l_n}}-1\right)\left(\frac{z}{\lambda_{k_n}}-1\right)^{-1} = 
  \sum_{n=1}^m A_{k_n,m}\left(\frac{1}{z-\lambda_{k_n}}+\frac{1}{\lambda_{k_n}}\right) + 1,
 \end{equation*}
where $A_{k_n,m}$ is the residue of the partial product at $a_{k_n}$.\\

If $\sigma(J_{h}(g))$ is not bounded above, then let ${C_n}$ be the circle with radius $\nu_{l_n}$ centered at the origin for 
$\nu_{l_n} ~\textgreater~ 0$. If $\sigma(J_{h}(g))$ is bounded above, then let ${C_n}$ be the circle with radius $|\nu_{l_n}|$ centered 
at the origin for $\nu_{l_n} ~\textless~ 0$.
This sequence of circles satisfy following properties for sufficiently large $n$:
 
 \begin{itemize}
 \item $C_n$ omits all the poles $\lambda_{k_n}$.
 \item Each $C_n$ lies inside $C_{n+1}$.
 \item The radius of $C_n$, $|\nu_{l_n}|$ diverges to infinity as $n$ goes to infinity.
\end{itemize}
Then,
\begin{align*}
\max_{z\in C_t}\left|\frac{p(z)-1}{\nu_{l_t}}\right| &= \max_{z\in C_t}\left|\frac{\mathcal{G}(z)-1 - 
	\sum_{n\in \mathbb{N}} A_{k_n}\left(\frac{1}{z-\lambda_{k_n}}+\frac{1}{\lambda_{k_n}}\right)}{\nu_{l_t}}\right|\\
&= \frac{1}{|\nu_{l_t}|} \max_{z\in C_t}\lim_{m \to \infty} \left| \sum_{n=1}^m A_{k_n,m}\left(\frac{1}{z-\lambda_{k_n}}+
\frac{1}{\lambda_{k_n}}\right) - \sum_{n=1}^m A_{k_n}\left(\frac{1}{z-\lambda_{k_n}}+\frac{1}{\lambda_{k_n}}\right)\right|\\
&= \lim_{m \to \infty} \frac{1}{|\nu_{l_t}|}\max_{z\in C_t}\left| \sum_{n=1}^m (A_{k_n,m}-A_{k_n})\frac{z}{\lambda_{k_n}(z-\lambda_{k_n})}\right|\\
&\leq \lim_{m \to \infty} \frac{1}{|\nu_{l_t}|}\sum_{n=1}^m |A_{k_n,m}-A_{k_n}|\frac{\nu_{l_t}}{\lambda_{k_n}|\nu_{l_t}-\lambda_{k_n}|}
~\textless~ \infty.
\end{align*}
Finiteness of the last limit follows from the two assumptions on the sequences $\displaystyle\{\lambda_{k_n}\}_{n\in \mathbb{N}}$ and 
$\displaystyle \{\nu_{l_n}\}_{n\in \mathbb{N}}$. Therefore $|p(z)-1|\leq C|z|$ on the circle $C_t$ for 
any $t \in \mathbb{N}$, where $C$ is a positive real number. By the maximum modulus theorem and the entireness of $p(z)$, we conclude 
that $p(z)$ is a polynomial of at most first degree.
\end{proof}

Using the {\u C}ebotarev type representation (\ref{CebTypeRep2}) we prove our inverse spectral results in non-matching index sets case 
with some additional convergence criterion on the two spectra. Theorems \ref{nIP1}, \ref{nIP2} and \ref{nIP3} are non-matching index sets versions of 
Theorems \ref{IP1}, \ref{IP2} and \ref{IP3} respectively.

\begin{theorem}\label{nIP1} \emph{\textbf{(Inverse Problem IV)}} Let $J(g)$ be the Jacobi operator with discrete spectrum, 
$\sigma(J_{h_1}(g)) = \{\lambda_n\}_{n \in M}$, $\sigma(J_{h_2}(g)) = \{\nu_n\}_{n \in M}$ and 
$\displaystyle\{\lambda_{k_n}\}_{n\in \mathbb{N}} \subset \sigma(J_{h_1}(g))$, 
$\displaystyle \{\nu_{l_n}\}_{n\in \mathbb{N}} \subset \sigma(J_{h_2}(g))$ such that
\begin{itemize}
 \item $\displaystyle \inf_{n \in \mathbb{N}} |\nu_{l_n} - \lambda_{k_n}|~ \textgreater ~0$,\\
 \item $\displaystyle \frac{A_{k_n,m} - A_{k_n}}{\lambda_{k_n}} \in l^1$ for all $n \in \{1,\dots,m\}$, for all $m \in \mathbb{N}$,\\
 \item $\{\lambda^{-1}_n\}_{n \in M} \in l^1$,\\
 \item $\{\nu^{-1}_n\}_{n \in M\text{\textbackslash}\{l_n\}_{n \in \mathbb{N}}} \in l^1$ and\\
 \item $\displaystyle 0~ \textless ~\prod_{n \in \mathbb{N}} \frac{\nu_{l_n}}{\lambda_{k_n}}~ \textless ~\infty$.
\end{itemize} 
Then $\{\lambda_n\}_{n \in M}$, $\{\nu_n\}_{n \in M}\text{\textbackslash}\{\nu_{l_n}\}_{n\in \mathbb{N}}$, 
$\{\gamma_{k_n}(h_1)\}_{n \in \mathbb{N}}$, $h_1$ and $h_2$ uniquely determine the operator $J(g)$, and if $J_{min} \neq J_{min}^*$, 
the boundary condition $g$ at infinity, where $\{\gamma_n(h_1)\}_{n \in M}$ are norming constants corresponding to $J_{h_1}(g)$. 
\end{theorem}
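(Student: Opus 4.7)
\emph{Proof plan.} The plan is to follow the strategy of Theorem \ref{IP1}, but to replace the classical {\u C}ebotarev representation by Lemma \ref{lmm}, which is tailored to infinite products whose zero--pole differences need not be absolutely summable.

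First I would define $F(z,g) := m_{h_1}(z,g)/m_{h_2}(z,g)$ as in the proof of Theorem \ref{IP1}, obtaining the product representation $\displaystyle F(z,g) = \prod_{n \in M}(z-\nu_n)/(z-\lambda_n)$ together with the residue identification $\textrm{Res}(F/\Delta,\lambda_n) = -1/\gamma_n(h_1)$, where $\Delta = h_1 - h_2$. I would then factor $F = GH$, choosing
\[
G(z,g) := \prod_{n \in \mathbb{N}}\frac{z-\nu_{l_n}}{z-\lambda_{k_n}}
\]
to carry the unknown zeros $\{\nu_{l_n}\}$, while $H(z,g) := F(z,g)/G(z,g)$ is a meromorphic function whose zeros $\{\nu_n\}_{n \in M\setminus\{l_n\}}$ and poles $\{\lambda_n\}_{n \in M\setminus\{k_n\}}$ lie entirely in the given data. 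The two $l^1$ hypotheses on $\{\lambda_n^{-1}\}_{n \in M}$ and $\{\nu_n^{-1}\}_{n \in M\setminus \{l_n\}}$ let me realize $H$ as a convergent Hadamard quotient and compute each value $H(\lambda_{k_j})$ explicitly from the data. Hence the residues
\[
\textrm{Res}(G, \lambda_{k_j}) \;=\; \frac{\textrm{Res}(F, \lambda_{k_j})}{H(\lambda_{k_j})} \;=\; \frac{-\Delta}{\gamma_{k_j}(h_1)\,H(\lambda_{k_j})}
\]
are determined.

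Since $\sum(\nu_{l_n}-\lambda_{k_n})$ need not converge, the representation used in Theorem \ref{IP1} does not apply directly to $G$. Instead I would pass to the renormalized product
\[
\mathcal{G}(z) := \prod_{n \in \mathbb{N}}\frac{z/\nu_{l_n}-1}{z/\lambda_{k_n}-1} \;=\; \frac{G(z,g)}{C}, \qquad C := \prod_{n \in \mathbb{N}}\frac{\nu_{l_n}}{\lambda_{k_n}},
\]
which is well-defined by the hypothesis $0 < \prod \nu_{l_n}/\lambda_{k_n} < \infty$, and apply Lemma \ref{lmm} to obtain
\[
\mathcal{G}(z) \;=\; az + b + \sum_{n \in \mathbb{N}} A_{k_n}\left(\frac{1}{z-\lambda_{k_n}}+\frac{1}{\lambda_{k_n}}\right),
\]
with $A_{k_n} = \textrm{Res}(\mathcal{G}, \lambda_{k_n}) = -\Delta/(C\,\gamma_{k_n}(h_1) H(\lambda_{k_n}))$. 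The constants $a$ and $b$ are easily pinned down: evaluating the product directly at $z = 0$ gives $\mathcal{G}(0) = 1$, so $b = 1$, while the existence of the finite limit $\mathcal{G}(z) \to 1/C$ as $z \to \infty$ forces $a = 0$.

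The main obstacle is that the residues $A_{k_n}$ still depend on the unknown normalization $C$. I would eliminate $C$ by taking $z \to \infty$ in the representation itself: interchanging the limit with the sum produces the scalar equation
\[
\frac{1}{C} \;=\; 1 - \frac{\Delta}{C}\sum_{n \in \mathbb{N}} \frac{1}{\gamma_{k_n}(h_1)\,H(\lambda_{k_n})\,\lambda_{k_n}},
\]
whose right-hand side depends only on known data, and solving yields $C$ uniquely. Once $C$ is identified, every $A_{k_n}$ is known, so $\mathcal{G}$ and hence $G$ are fully reconstructed; the missing eigenvalues $\{\nu_{l_n}\}$ are then simply the zeros of $G$. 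Applying Theorem \ref{2ST} to the now-complete pair $\sigma(J_{h_1}(g)), \sigma(J_{h_2}(g))$ with $h_1, h_2$ then determines $J(g)$ and, in the limit circle case, the boundary condition $g$ at infinity.
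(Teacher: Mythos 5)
Your proposal is correct and follows essentially the same route as the paper: factor $F=m_{h_1}/m_{h_2}$ so that the unknown zeros $\{\nu_{l_n}\}$ sit over the poles $\{\lambda_{k_n}\}$, pass to the renormalized product and apply Lemma \ref{lmm}, identify $a=0$ and the residues from $\{\gamma_{k_n}(h_1)\}$ and the known factor $H$, and finish with Theorem \ref{2ST}. The only point of divergence is the normalization constant: the paper works with $\bigl(\prod_{n\in M}\nu_n/\lambda_n\bigr)\mathcal{G}(z,g)$, in which the unknown factors $\nu_{l_n}/\lambda_{k_n}$ cancel so that the constant in front is known outright from the data, whereas you solve for the unknown $C=\prod_{n\in\mathbb{N}}\nu_{l_n}/\lambda_{k_n}$ by combining the two evaluations $\mathcal{G}(0)=1$ and $\mathcal{G}(it)\to 1/C$ --- both devices work and rest on exactly the same convergence and interchange-of-limits steps that the paper itself uses.
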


 \begin{proof}
  As we discussed in the proof of Theorem \ref{IP1} wlog we assume $h_1 ~\textgreater~ h_2$. Recall that in this case 
  \begin{equation*}
   \Delta := h_1 - h_2 = \sum_{n \in M} \nu_n - \lambda_n ~\textless~ \infty. 
  \end{equation*}
  Let us define 
  \begin{equation}
   \mathcal{F}(z,g) := \frac{1}{\Delta}\prod_{n \in M}\frac{\lambda_n}{\nu_n}\prod_{n \in M}\frac{z-\nu_n}{z-\lambda_n}.
  \end{equation}
 Note that we assume $\{\lambda^{-1}_n\}_{n \in M} \in l^1$, $\{\nu^{-1}_n\}_{n \in M\text{\textbackslash}\{l_n\}_{n \in \mathbb{N}}} \in l^1$ and 
 $\displaystyle 0~ \textless ~\prod_{n \in \mathbb{N}} \frac{\nu_{l_n}}{\lambda_{k_n}}~ \textless ~\infty$. Therefore $\mathcal{F}(z,g)$ has the 
 representation $\mathcal{F} = \mathcal{G}\mathcal{H}$, where 
 \begin{equation*}
  \mathcal{G}(z,g) := \frac{1}{\Delta}\prod_{n \in \mathbb{N}}\frac{\nu_{l_n}-z}{\nu_{l_n}}\frac{\lambda_{k_n}}{\lambda_{k_n}-z}
  = \frac{1}{\Delta}\prod_{n \in \mathbb{N}}\frac{\lambda_{k_n}}{\nu_{l_n}}\prod_{n \in \mathbb{N}}\frac{z-\nu_{l_n}}{z-\lambda_{k_n}}
 \end{equation*}
 and
 \begin{equation*}
  \mathcal{H}(z,g) := \prod_{n \in M\text{\textbackslash}\{l_n\}}\frac{\nu_n-z}{\nu_n}
  \prod_{n \in M\text{\textbackslash}\{k_n\}}\frac{\lambda_n}{\lambda_n-z}.
 \end{equation*}
By (\ref{repF})
\begin{equation*}
 Res\left(\left[\prod_{n \in M}\frac{\nu_n}{\lambda_n}\right]\mathcal{G}(z,g), z = \lambda_k\right) = \frac{1}{\mathcal{H}(\lambda_k)\gamma_k(h_1)},
\end{equation*}
so we know the residues of the infinite product $\displaystyle\left(\prod_{n \in M}\frac{\nu_n}{\lambda_n}\right)\mathcal{G}(z,g)$ at $\lambda_k$ for 
any $k \in \{k_n\}_{n \in \mathbb{N}}$. This inifinite product has the representation
\begin{equation*}
 \left(\prod_{n \in M}\frac{\nu_n}{\lambda_n}\right)\mathcal{G}(z,g) = 
 \left(\frac{1}{\Delta}\prod_{n \in M}\frac{\nu_n}{\lambda_n}\prod_{n \in \mathbb{N}}\frac{\lambda_{k_n}}{\nu_{l_n}}\right)
 \prod_{n \in \mathbb{N}}\frac{z-\nu_{l_n}}{z-\lambda_{k_n}} = C\prod_{n \in \mathbb{N}}\frac{z-\nu_{l_n}}{z-\lambda_{k_n}}.
\end{equation*}
Let us observe that $C$ is a real constant depending only on $\{\lambda_n\}_{n \in M \text{\textbackslash}\{k_n\}}$, 
$\{\nu_n\}_{n \in M \text{\textbackslash}\{l_n\}}$, $h_1$ and $h_2$, so we also know $C$. From Lemma \ref{lmm} we get the {\u C}ebotarev type 
representation 
\begin{equation*}
 \left(\prod_{n \in M}\frac{\nu_n}{\lambda_n}\right)\mathcal{G}(z,g) = C\prod_{n \in \mathbb{N}}\frac{z-\nu_{l_n}}{z-\lambda_{k_n}} 
 = az + b + \sum_{n\in \mathbb{N}} A_{k_n}\left(\frac{1}{z-\lambda_{k_n}}+\frac{1}{\lambda_{k_n}}\right).
\end{equation*}
Using similar arguments as in the proof of Theorem \ref{IP1} one finds that $a = 0$ and 
$b = C - \displaystyle\sum_{n\in \mathbb{N}}\frac{A_{k_n}}{\lambda_{k_n}}$ and hence 
\begin{equation}\label{RepG4}
 \left(\prod_{n \in M}\frac{\nu_n}{\lambda_n}\right)\mathcal{G}(z,g) = C + \sum_{n\in \mathbb{N}}\frac{A_{k_n}}{z-\lambda_{k_n}}.
\end{equation}
The right hand side of (\ref{RepG4}) is known. This implies uniqueness of $\mathcal{G}(z,g)$ and hence uniqueness of 
$\{\nu_{l_n}\}_{n \in \mathbb{N}}$. After unique recovery of the two spectra $\sigma(J_{h_1}(g))$ and $\sigma(J_{h_2}(g))$, 
the operator $J$ is uniquely determined by Theorem \ref{2ST}.
\end{proof}

\begin{theorem}\label{nIP2} \emph{\textbf{(Inverse Problem V)}} Let $J(g)$ be the Jacobi operator with discrete spectrum, 
$\sigma(J_{h_1}(g)) = \{\lambda_n\}_{n \in M}$, $\sigma(J_{h_2}(g)) = \{\nu_n\}_{n \in M}$ and 
$\displaystyle\{\lambda_{k_n}\}_{n\in \mathbb{N}} \subset \sigma(J_{h_1}(g))$, 
$\displaystyle \{\nu_{l_n}\}_{n\in \mathbb{N}} \subset \sigma(J_{h_2}(g))$ such that
\begin{itemize}
 \item $\displaystyle \inf_{n \in \mathbb{N}} |\nu_{l_n} - \lambda_{k_n}| ~\textgreater~ 0$,\\
 \item $\displaystyle \frac{A_{k_n,m} - A_{k_n}}{\lambda_{k_n}} \in l^1$ for all $n \in \{1,\dots,m\}$, for all $m \in \mathbb{N}$,\\
 \item $\{\lambda^{-1}_n\}_{n \in M} \in l^1$,\\
 \item $\{\nu^{-1}_n\}_{n \in M\text{\textbackslash}\{l_n\}_{n \in \mathbb{N}}} \in l^1$ and\\
 \item $\displaystyle 0~ \textless ~\prod_{n \in \mathbb{N}} \frac{\nu_{l_n}}{\lambda_{k_n}}~ \textless ~\infty$.
\end{itemize} 
Then $\{\lambda_n\}_{n \in M}$, $\{\nu_n\}_{n \in M\text{\textbackslash}\{l_n\}_{n\in \mathbb{N}}}$, 
$\{\gamma_{k_n}(h_1)\}_{n \in \mathbb{N}}$, $h_1$(respectively $h_2$) and $\nu_m$ for some 
$m \in \{l_n\}_{n \in \mathbb{N}}$ uniquely determine the operator $J(g)$, $h_2$(respectively $h_1$) and if 
$J_{min} \neq J_{min}^*$, the boundary condition $g$ at infinity, where $\{\gamma_n(h_1)\}_{n \in M}$ are norming constants corresponding to $J_{h_1}(g)$. 
\end{theorem}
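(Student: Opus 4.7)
The plan is to adapt the proof of Theorem \ref{nIP1} in precisely the way the proof of Theorem \ref{IP2} adapts that of Theorem \ref{IP1}: the extra datum $\nu_m$ replaces the missing boundary condition and is used to pin down the single free real constant left in the \u{C}ebotarev-type representation. Concretely, I would begin by forming the Herglotz quotient $F(z,g) = m_{h_1}/m_{h_2}$ and its product representation $\prod_{n \in M}\frac{z-\nu_n}{z-\lambda_n}$ from the proof of Theorem \ref{IP1}, split the product into the known piece
\begin{equation*}
\mathcal{H}(z,g) := \prod_{n \in M\text{\textbackslash}\{l_n\}}\frac{\nu_n-z}{\nu_n}\prod_{n \in M\text{\textbackslash}\{k_n\}}\frac{\lambda_n}{\lambda_n-z}
\end{equation*}
and the unknown piece $\mathcal{G}(z,g)$ supported on $\{\lambda_{k_n}\},\{\nu_{l_n}\}$, and read off the residue of $\bigl(\prod_{n \in M}\frac{\nu_n}{\lambda_n}\bigr)\mathcal{G}$ at each $\lambda_{k_n}$ from $\gamma_{k_n}(h_1)$ exactly as in Theorem \ref{nIP1}.

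Next, I would apply Lemma \ref{lmm} to obtain
\begin{equation*}
\Bigl(\prod_{n \in M}\frac{\nu_n}{\lambda_n}\Bigr)\mathcal{G}(z,g) \;=\; az + b + \sum_{n\in \mathbb{N}} A_{k_n}\Bigl(\frac{1}{z-\lambda_{k_n}}+\frac{1}{\lambda_{k_n}}\Bigr),
\end{equation*}
where the residues $A_{k_n}$ are already known. The asymptotic analysis along $z = it$ as $t \to \infty$, carried out exactly as in the proofs of Theorems \ref{IP1} and \ref{nIP1}, forces $a=0$. In Theorem \ref{nIP1} the constant $b$ is then computed from the explicit value of $\Delta=h_1-h_2$; here $h_2$ (respectively $h_1$) is missing, so $\Delta$ and therefore $b$ are a priori unknown.

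The uniqueness step is then modeled on the proof of Theorem \ref{IP2}. Suppose $\widetilde{\mathcal{G}}(z,g) := \prod_{n \in \mathbb{N}} \frac{z-\widetilde{\nu}_{l_n}}{z-\lambda_{k_n}}\cdot \mathrm{const}$ is built from another admissible sequence $\{\widetilde{\nu}_{l_n}\}$ satisfying the convergence hypotheses of the theorem, sharing all the poles $\{\lambda_{k_n}\}$ and residues $\{A_{k_n}\}$ with $\mathcal{G}$, and with $\widetilde{\nu}_m = \nu_m$. Applying Lemma \ref{lmm} to $\widetilde{\mathcal{G}}$ as well yields a representation of the same shape with some constant $\widetilde{b}$, so $\bigl(\prod_{n \in M}\frac{\nu_n}{\lambda_n}\bigr)(\mathcal{G}-\widetilde{\mathcal{G}})$ is a real constant. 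Evaluating at the common zero $z=\nu_m$ shows this constant is zero, hence $\mathcal{G} = \widetilde{\mathcal{G}}$ and $\{\nu_{l_n}\}_{n \in \mathbb{N}}$ is uniquely recovered. With both spectra $\sigma(J_{h_1}(g))$ and $\sigma(J_{h_2}(g))$ in hand, Theorem \ref{2ST} produces the operator, the missing boundary condition, and $g$ in the limit circle case.

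The main subtlety I expect is verifying that the competitor $\widetilde{\mathcal{G}}$ genuinely lies in the scope of Lemma \ref{lmm}, i.e.\ that the standing hypotheses on $\inf_n|\nu_{l_n}-\lambda_{k_n}|$, on $(A_{k_n,m}-A_{k_n})/\lambda_{k_n}\in \ell^1$, and on the product $\prod \nu_{l_n}/\lambda_{k_n}$ are all stable under the replacement of $\{\nu_{l_n}\}$ by $\{\widetilde{\nu}_{l_n}\}$. Since the residues and poles match and $\widetilde{\nu}_m = \nu_m$ pins down one of the zeros, the verification is bookkeeping rather than a new idea; the real structural input is the extra eigenvalue $\nu_m$, which does for the unknown constant what knowledge of $\Delta$ did in Theorem \ref{nIP1}.
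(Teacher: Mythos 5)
Your proposal is correct and follows essentially the same route as the paper: reduce to the \u{C}ebotarev-type representation of $\bigl(\prod_{n\in M}\frac{\nu_n}{\lambda_n}\bigr)\mathcal{G}$ from the proof of Theorem \ref{nIP1}, observe that the competitor $\widetilde{\mathcal{G}}$ with matching poles and residues differs from $\mathcal{G}$ by a real constant, and kill that constant by evaluating at the common zero $\nu_m$ before invoking Theorem \ref{2ST}. The subtlety you flag about the competitor satisfying the hypotheses of Lemma \ref{lmm} is handled in the paper simply by assuming the competitor's zeros and poles satisfy the asymptotic properties of Theorem \ref{C2S}, so your treatment matches the paper's.
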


\begin{proof}
Following the proof of Theorem \ref{nIP1} we get representation
\begin{equation}\label{RepG5}
 \left(\prod_{n \in M}\frac{\nu_n}{\lambda_n}\right)\mathcal{G}(z,g) = C + \sum_{n\in \mathbb{N}}\frac{A_{k_n}}{z-\lambda_{k_n}},
\end{equation}
for the infinite product
\begin{equation*}
 \mathcal{G}(z,g) := \frac{1}{\Delta}\prod_{n \in \mathbb{N}}\frac{\nu_{l_n}-z}{\nu_{l_n}}\frac{\lambda_{k_n}}{\lambda_{k_n}-z},
\end{equation*}
where $A_j$ is the residue of $\displaystyle\left(\prod_{n \in M}\frac{\nu_n}{\lambda_n}\right)\mathcal{G}(z,g)$ at $\lambda_j$ and $C \in \mathbb{R}$.

Now let us prove uniqueness of $\displaystyle\left(\prod_{n \in M}\frac{\nu_n}{\lambda_n}\right)\mathcal{G}(z,g)$. Note that we know 
$\{\lambda_{k_n}\}_{n \in \mathbb{N}}$, $\{-A_{k_n}\}_{n \in \mathbb{N}}$ and 
$\nu_m$. Let the infinite product
\begin{equation*}
 \left(\prod_{n \in M}\frac{\widetilde{\nu}_n}{\lambda_n}\right)\widetilde{\mathcal{G}}(z,g) := \left(\prod_{n \in M}\frac{\widetilde{\nu}_n}{\lambda_n}\right)
 \frac{1}{\Delta}\prod_{n \in \mathbb{N}}\frac{\widetilde{\nu}_{l_n}-z}{\widetilde{\nu}_{l_n}}\frac{\lambda_{k_n}}{\lambda_{k_n}-z}
\end{equation*}
share the same set of poles $\{\lambda_{k_n}\}_{n \in \mathbb{N}}$ and the same residues $\{-A_{k_n}\}_{n \in \mathbb{N}}$ at the corresponding poles 
with $\displaystyle\left(\prod_{n \in M}\frac{\nu_n}{\lambda_n}\right)\mathcal{G}(z,g)$. 
In addition assume $\widetilde{\nu}_j = \nu_j$ for all $j \in M\text{\textbackslash}\{l_n\}_{n\in \mathbb{N}}$ and $\mathcal{G}(z,g)$ and 
$\widetilde{\mathcal{G}}(z,g)$ have the common zero $\nu_m$, i.e. $\widetilde{\nu}_m = \nu_m$.
Let us also assume zeros and poles 
of $\widetilde{\mathcal{G}}(z,g)$ satisfy asymptotic properties of Theorem \ref{C2S}. 
Then we know that $\widetilde{\mathcal{G}}(z,g)$ has the infinite sum representation
\begin{equation}\label{RepG_5}
 \left(\prod_{n \in M}\frac{\widetilde{\nu}_n}{\lambda_n}\right)\widetilde{\mathcal{G}}(z,g) = 
 \widetilde{C} + \sum_{n\in \mathbb{N}}\frac{A_{k_n}}{z-\lambda_{k_n}}
\end{equation}
From (\ref{RepG5}) and (\ref{RepG_5}), the difference of 
$\displaystyle\left(\prod_{n \in M}\frac{\nu_n}{\lambda_n}\right)\mathcal{G}(z,g)$ and 
$\displaystyle\left(\prod_{n \in M}\frac{\widetilde{\nu}_n}{\lambda_n}\right)\widetilde{\mathcal{G}}(z,g)$ is a real constant, 
which is zero since 
$\mathcal{G}(\nu_m,g)=\widetilde{\mathcal{G}}(\nu_m,g)=0$. This implies uniqueness of $\displaystyle\left(\prod_{n \in M}\frac{\nu_n}{\lambda_n}\right)\mathcal{G}(z,g)$ 
and hence uniqueness of $\{\nu_{l_n}\}_{n\in \mathbb{N}}$. After unique recovery of the two spectra $\sigma(J_{h_1}(g))$ and 
$\sigma(J_{h_2}(g))$, the operator $J$ is uniquely determined by Theorem \ref{2ST}.
\end{proof}

\begin{theorem}\label{nIP3} \emph{\textbf{(Inverse Problem VI)}} Let $J(g)$ be the Jacobi operator with discrete spectrum, 
$\sigma(J_{h_1}(g)) = \{\lambda_n\}_{n \in M}$, $\sigma(J_{h_2}(g)) = \{\nu_n\}_{n \in M}$ and 
$\displaystyle\{\lambda_{k_n}\}_{n\in \mathbb{N}} \subset \sigma(J_{h_1}(g))$, 
$\displaystyle \{\nu_{l_n}\}_{n\in \mathbb{N}} \subset \sigma(J_{h_2}(g))$ such that
\begin{itemize}
 \item $\displaystyle \inf_{n \in \mathbb{N}} |\nu_{l_n} - \lambda_{k_n}| ~\textgreater~ 0$,\\
 \item $\displaystyle \frac{A_{k_n,m} - A_{k_n}}{\lambda_{k_n}} \in l^1$ for all $n \in \{1,\dots,m\}$, for all $m \in \mathbb{N}$,\\
 \item $\{\lambda^{-1}_n\}_{n \in M} \in l^1$,\\
 \item $\{\nu^{-1}_n\}_{n \in M\text{\textbackslash}\{l_n\}_{n \in \mathbb{N}}} \in l^1$ and\\
 \item $\displaystyle 0~ \textless ~\prod_{n \in \mathbb{N}} \frac{\nu_{l_n}}{\lambda_{k_n}}~ \textless ~\infty$.
\end{itemize} 
Then $\{\lambda_n\}_{n \in M}$, $\{\nu_n\}_{n \in M\text{\textbackslash}\{l_n\}_{n\in \mathbb{N}}}$, 
$\{\gamma_{k_n}(h_1)\}_{n \in \mathbb{N}}$, $h_1$(respectively $h_2$) and $\gamma_m(h_1)$ for some 
$m \in M\text{\textbackslash}\{k_n\}_{n \in \mathbb{N}}$ uniquely determine the operator $J(g)$, $h_2$(respectively $h_1$) and 
if $J_{min} \neq J_{min}^*$, the boundary condition $g$ at infinity, where $\{\gamma_n(h_1)\}_{n \in M}$ are norming constants corresponding 
to $J_{h_1}(g)$. 
\end{theorem}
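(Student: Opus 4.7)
The plan is to adapt the proof of Theorem \ref{IP3} to the non-matching setting, just as Theorem \ref{nIP2} adapts Theorem \ref{IP2}. I would first enlarge the index sets by setting $\{k'_n\}_{n\in\mathbb{N}} := \{k_n\}_{n\in\mathbb{N}} \cup \{m\}$ and $\{l'_n\}_{n\in\mathbb{N}} := \{l_n\}_{n\in\mathbb{N}} \cup \{m\}$ (this implicitly uses that $m\notin\{l_n\}$ so that $\nu_m$ is part of the given second-spectrum data, which is the direct analog of the condition $m\in M\setminus A$ in Theorem \ref{IP3}). Then I would redefine the factors of $\mathcal{F}$ exactly as in the proof of Theorem \ref{nIP1}, with $\{k'_n\},\{l'_n\}$ in place of $\{k_n\},\{l_n\}$:
\[
\mathcal{G}(z,g) := \frac{1}{\Delta}\prod_{n \in \mathbb{N}} \frac{\lambda_{k'_n}}{\nu_{l'_n}} \prod_{n \in \mathbb{N}} \frac{z - \nu_{l'_n}}{z - \lambda_{k'_n}}, \qquad \mathcal{H}(z,g) := \prod_{n \in M\setminus\{l'_n\}} \frac{\nu_n - z}{\nu_n} \prod_{n \in M\setminus\{k'_n\}} \frac{\lambda_n}{\lambda_n - z}.
\]

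Before invoking Lemma \ref{lmm}, I would check that the five hypotheses of the theorem are inherited by the enlarged sequences $\{k'_n\},\{l'_n\}$; since we are only inserting one pair, the $\inf>0$ condition, the two $l^1$-conditions on $\{\lambda_n^{-1}\}$ and $\{\nu_n^{-1}\}$, and the convergence $0<\prod \nu_{l'_n}/\lambda_{k'_n}<\infty$ all carry over, and the $l^1$-condition on $(A_{k'_n,m}-A_{k'_n})/\lambda_{k'_n}$ is preserved once we re-index appropriately. Running the argument of Theorem \ref{nIP1} verbatim then yields
\[
\left(\prod_{n\in M}\frac{\nu_n}{\lambda_n}\right)\mathcal{G}(z,g) = C + \sum_{n\in\mathbb{N}} \frac{A_{k'_n}}{z-\lambda_{k'_n}},
\]
where the residues $A_{k'_n}$ are entirely known (the old ones come from $\{\gamma_{k_n}(h_1)\}$, and the new one, at index $k'_n=m$, comes precisely from the extra datum $\gamma_m(h_1)$), while the constant $C$ is \emph{not} known because it depends on the unknown $\Delta=h_1-h_2$.

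Uniqueness of $\mathcal{G}$ is then established exactly as in Theorem \ref{nIP2}: given any candidate $\widetilde{\mathcal{G}}$ built from another sequence $\{\widetilde\nu_{l'_n}\}$ that shares the poles and residues above and whose data satisfies Theorem \ref{C2S}, the two representations force $(\prod \nu_n/\lambda_n)\mathcal{G}(z,g) - (\prod \widetilde\nu_n/\lambda_n)\widetilde{\mathcal{G}}(z,g)$ to be a real constant; evaluating at $z=\nu_m$, where both sides vanish by construction (since $m\in\{l'_n\}$ and $\widetilde\nu_m=\nu_m$), forces this constant to be zero. Hence $\mathcal{G}$ is unique, which pins down $\{\nu_{l_n}\}_{n\in\mathbb{N}}$ and therefore $\sigma(J_{h_2}(g))$; Theorem \ref{2ST} then recovers $J(g)$, the remaining boundary condition, and $g$ in the limit circle case. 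The main obstacle I expect is the bookkeeping around re-indexing $\{k'_n\},\{l'_n\}$ after inserting the single index $m$ so that the hypotheses of Lemma \ref{lmm}, which are sensitive to the ordering via the partial residues $A_{k'_n,m}$, continue to hold; everything else is a clean transcription of the arguments from Theorems \ref{nIP1} and \ref{nIP2}.
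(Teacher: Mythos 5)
Your proposal is correct and follows essentially the same route as the paper: the paper likewise absorbs the extra pair $(\lambda_m,\nu_m)$ into $\mathcal{G}$ (written there as an explicit rational prefactor $\frac{\nu_m-z}{\nu_m}\frac{\lambda_m}{\lambda_m-z}$ rather than via re-indexed sequences $\{k_n'\},\{l_n'\}$), obtains the {\u C}ebotarev-type representation with the additional known residue $A_m$ supplied by $\gamma_m(h_1)$, and removes the unknown constant by evaluating the difference of the two representations at the common zero $\nu_m$. Your side remark that one implicitly needs $m\notin\{l_n\}_{n\in\mathbb{N}}$ for $\nu_m$ to be part of the given data is a fair observation that the paper leaves tacit.
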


\begin{proof}
 Following the proof of Theorem \ref{nIP1} and redefining $\mathcal{G}$ and $\mathcal{H}$ as 
\begin{equation*}
 \mathcal{G}(z,g) := \frac{1}{\Delta}\left(\frac{\nu_m -z}{\nu_m}\frac{\lambda_m}{\lambda_m -z}\right)\prod_{n \in \mathbb{N}}\frac{\nu_{l_n}-z}{\nu_{l_n}}\frac{\lambda_{k_n}}{\lambda_{k_n}-z}
\end{equation*}
and
\begin{equation*}
 \mathcal{H}(z,g) := \left(\frac{\nu_m -z}{\nu_m}\frac{\lambda_m}{\lambda_m -z}\right)^{-1}
 \prod_{n \in M\text{\textbackslash}\{l_n\}}\frac{\nu_n-z}{\nu_n}\prod_{n \in M\text{\textbackslash}\{k_n\}}\frac{\lambda_n}{\lambda_n-z}
\end{equation*}
we get 
\begin{equation}\label{RepG6}
 \left(\prod_{n \in M}\frac{\nu_n}{\lambda_n}\right)\mathcal{G}(z,g) = C + \frac{A_m}{z-\lambda_m} + \sum_{n\in \mathbb{N}}\frac{A_{k_n}}{z-\lambda_{k_n}},
\end{equation}
where $A_j$ is the residue of $\displaystyle\left(\prod_{n \in M}\frac{\nu_n}{\lambda_n}\right)\mathcal{G}(z,g)$ at $\lambda_j$ and $C \in \mathbb{R}$.

Now let us prove uniqueness of $\displaystyle\left(\prod_{n \in M}\frac{\nu_n}{\lambda_n}\right)\mathcal{G}(z,g)$. Note that we know 
$\{\lambda_{k_n}\}_{n \in \mathbb{N}}\cup\{\lambda_m\}$, $\{A_{k_n}\}_{n \in \mathbb{N}}\cup\{A_m\}$ and $\nu_m$. 
Let the infinite product
\begin{equation*}
 \left(\prod_{n \in M}\frac{\widetilde{\nu}_n}{\lambda_n}\right)\widetilde{\mathcal{G}}(z,g) := \left(\prod_{n \in M}\frac{\widetilde{\nu}_n}{\lambda_n}\right)
 \frac{1}{\Delta}\left(\frac{\nu_m -z}{\nu_m}\frac{\lambda_m}{\lambda_m -z}\right)\prod_{n \in \mathbb{N}}\frac{\nu_{l_n}-z}{\nu_{l_n}}\frac{\lambda_{k_n}}{\lambda_{k_n}-z}
\end{equation*}
share the same set of poles $\{\lambda_{k_n}\}_{n \in \mathbb{N}}\cup\{\lambda_m\}$ and the same residues $\{A_{k_n}\}_{n \in \mathbb{N}}\cup\{A_m\}$ at 
the corresponding poles with $\mathcal{G}(z,g)$. In addition assume $\widetilde{\nu}_j = \nu_j$ for all $j \in M\text{\textbackslash}\{l_n\}_{n\in \mathbb{N}}$. 
Let us also assume zeros and poles of $\widetilde{\mathcal{G}}(z,g)$ satisfy asymptotic properties of Theorem \ref{C2S}. Then we know that 
$\widetilde{\mathcal{G}}(z,g)$ has the infinite sum representation
\begin{equation}\label{RepG_6}
 \left(\prod_{n \in M}\frac{\widetilde{\nu}_n}{\lambda_n}\right)\widetilde{\mathcal{G}}(z,g) = \widetilde{C} + \frac{A_m}{z-\lambda_m} + \sum_{n\in \mathbb{N}}\frac{A_{k_n}}{z-\lambda_{k_n}}
\end{equation}
From (\ref{RepG6}) and (\ref{RepG_6}), the difference of 
$\displaystyle\left(\prod_{n \in M}\frac{\nu_n}{\lambda_n}\right)\mathcal{G}(z,g)$ and 
$\displaystyle\left(\prod_{n \in M}\frac{\widetilde{\nu}_n}{\lambda_n}\right)\widetilde{\mathcal{G}}(z,g)$ is a real constant, 
which is zero since 
$\mathcal{G}(\nu_m,g)=\widetilde{\mathcal{G}}(\nu_m,g)=0$. This implies uniqueness of $\displaystyle\left(\prod_{n \in M}\frac{\nu_n}{\lambda_n}\right)\mathcal{G}(z,g)$ 
and hence uniqueness of $\{\nu_{l_n}\}_{n\in \mathbb{N}}$. After unique recovery of the two spectra $\sigma(J_{h_1}(g))$ and 
$\sigma(J_{h_2}(g))$, the operator $J$ is uniquely determined by Theorem \ref{2ST}.

\end{proof}


\begin{thebibliography}{99}

\bibitem{BD} \textsc{N. Bebiano, J. da Provid\^{e}ncia}, \textit{Inverse problems for pseudo-Jacobi matrices: existence and uniqueness results}, 
Inverse Probl. \textbf{27(2)}(2011), 025005.

\bibitem{BOR} \textsc{G. Borg}, \textit{Eine Umkehrung der Sturm-Liouvilleschen Eigenwertaufgabe}, Acta Math. \textbf{78}(1946), 1-96 (in German).

\bibitem{CGR} \textsc{S. Clark, F. Gesztesy, W. Renger}, \textit{Trace formulas and Borg-type theorems for matrix-valued Jacobi and Dirac finite 
difference operators}, J. Differential Equations \textbf{219(1)}(2005), 144-182. 

\bibitem{D} \textsc{M.S. Derevyagin}, \textit{Borg-type theorems for generalized Jacobi matrices and trace formulas}, 
Methods Funct. Anal. Topology \textbf{12(3)}(2006), 220-233.

\bibitem{D2} \textsc{M.S. Derevyagin}, \textit{Inverse problems for periodic generalized Jacobi matrices}, 
J. Math. Anal. Appl. \textbf{384(2)}(2011), 444-452.

\bibitem{DK} \textsc{R. del Rio, M. Kudryavtsev}, \textit{Inverse problems for Jacobi operators: I. Interior mass-spring perturbations in finite systems}, 
Inverse Probl. \textbf{28(5)}(2012), 055007.

\bibitem{DKS} \textsc{R. del Rio, M. Kudryavtsev, L.O. Silva}, \textit{Inverse problems for Jacobi operators II: Mass perturbations of semi‐infinite 
mass‐spring systems}, Zh. Mat. Fiz. Anal. Geom. \textbf{9(2)}(2013), 165-190.

\bibitem{DKS2} \textsc{R. del Rio, M. Kudryavtsev, L.O. Silva}, \textit{Inverse problems for Jacobi operators III: Mass-spring perturbations of 
semi-infinite systems}, Inverse Probl. Imaging \textbf{6(4)}(2012), 599-621.

\bibitem{DKS3} \textsc{R. del Rio, M. Kudryavtsev, L.O. Silva}, \textit{Inverse problems for Jacobi operators IV: interior mass-spring perturbations of 
semi-infinite systems}, Inverse Probl. \textbf{33(5)}(2017), 055014.

\bibitem{DS} \textsc{R. del Rio, L.O. Silva}, \textit{Stability of determinacy and inverse spectral problems for Jacobi operators}, 
Oper. Matrices \textbf{12(3)}(2018), 619-641.

\bibitem{ET} \textsc{J. Eckhardt, G. Teschl}, \textit{Singular Weyl-Titchmarsh-Kodaira theory for Jacobi operators}, 
Oper. Matrices \textbf{7(3)}(2013), 695-712.

\bibitem{GES} \textsc{F. Gesztesy}, \textit{Inverse spectral theory as influenced by Barry Simon. Spectral theory and mathematical physics: 
a Festschrift in honor of Barry Simon's 60th birthday}, 741-820, Proc. Sympos. Pure Math., 76, AMS, Providence, Rhode Island, 2007.

\bibitem{GKM} \textsc{F. Gesztesy, A. Kiselev, K.A. Makarov}, \textit{Uniqueness results for matrix-valued Schr\"{o}dinger, Jacobi, and Dirac-type 
operators}, Math. Nachr. \textbf{239-240}(2002), 103-145.

\bibitem{GS} \textsc{F. Gesztesy, B. Simon}, \textit{m-functions and inverse spectral analysis for finite and semi-infinite Jacobi matrices}, 
J. Anal. Math. \textbf{73}(1997), 267-297.

\bibitem{GLA} \textsc{G.M.L. Gladwell}, \textit{Inverse Problems in Vibration}, 
Second edition. Solid Mechanics and its Applications, 119, Kluwer Academic Publishers, Dordrecht, 2004.

\bibitem{HAT} \textsc{B. Hat\.{i}no\u{g}lu}, \textit{Mixed data in inverse spectral problems for the Schroedinger operators} 
(2019), https://arxiv.org/pdf/1903.02600.pdf

\bibitem{HOC} \textsc{H. Hochstadt}, \textit{On some inverse problems in matrix theory}, Arch. Math. \textbf{18}(1967), 201-207.

\bibitem{HOC2} \textsc{H. Hochstadt}, \textit{On the construction of a Jacobi matrix from spectral data}, 
Linear Algebra Appl. \textbf{8}(1974), 435-446.

\bibitem{HOC3} \textsc{H. Hochstadt}, \textit{On the construction of a Jacobi matrix from mixed given data}, 
Linear Algebra Appl. \textbf{28}(1979), 113-115.

\bibitem{HL} \textsc{H. Hochstadt, B. Lieberman}, \textit{An inverse Sturm-Liouville problem with mixed given data}, 
SIAM J. Appl. Math. \textbf{34(4)}(1978), 676-680.

\bibitem{LEV}  \textsc{B.J. Levin}, \textit{Distribution of Zeros of Entire Functions}, Translations of Mathematical Monographs,
Volume 5, AMS, Providence, Rhode Island, 1980.

\bibitem{MAR} \textsc{V.A. Marchenko}, \textit{Some questions in the theory of one-dimensional linear differential operators 
of the second order I}, Trudy Moskov Mat. Obsc. \textbf{1}(1952), 327-420 (in Russian).

\bibitem{RAM} \textsc{Y.M. Ram}, \textit{Inverse eigenvalue problem for a modified vibrating system}, 
SIAM J. Appl. Math. \textbf{53}(1993), 1762-1775.

\bibitem{S} \textsc{C.T. Shieh}, \textit{Some inverse problems on Jacobi matrices}, 
Inverse Probl. \textbf{20(2)}(2004), 589-600.

\bibitem{SW} \textsc{L.O. Silva, R. Weder}, \textit{On the two spectra inverse problem for semi-infinite Jacobi matrices}, 
Math. Phys. Anal. Geom \textbf{9}(2007), 263-290.

\bibitem{SW2} \textsc{L.O. Silva, R. Weder}, \textit{The two-spectra inverse problem for semi-infinite Jacobi matrices in the limit-circle case}, 
Math. Phys. Anal. Geom \textbf{11}(2008), 131-154.

\bibitem{SIM} \textsc{B. Simon}, \textit{The classical moment problem as a self-adjoint finite difference operator}, 
Adv. Math. \textbf{137}(1998), 82-203.

\bibitem{TES} \textsc{G. Teschl}, \textit{Jacobi Operators and Completely Integrable Nonlinear Lattices}, 
Mathematical Surveys and Monographs, Volume 72, AMS, Providence, Rhode Island, 1999.

\bibitem{TES2} \textsc{G. Teschl}, \textit{Trace formulas and inverse spectral theory for Jacobi operators}, 
Comm. Math. Phys. \textbf{196(1)}(1998), 175-202.

\bibitem{WW} \textsc{G. Wei, Z. Wei}, \textit{Inverse spectral problem for Jacobi matrices with partial spectral data}, 
Inverse Probl. \textbf{27(7)}(2011), 075007.



\end{thebibliography}
\end{document}